\documentclass[submission,copyright,creativecommons]{eptcs}
\providecommand{\event}{AiML 2026} 

\usepackage{aiml26}

\usepackage{iftex}

\ifpdf
  \usepackage{underscore}         
  \usepackage[T1]{fontenc}        
\else
  \usepackage{breakurl}           
\fi

\usepackage{hyperref}
\usepackage{amsthm,amsmath,amssymb,amsfonts,multicol}
\usepackage{tikz}

\hypersetup{
    colorlinks=true,
    linkcolor=black,
    urlcolor=blue,
    citecolor=black,
}

\theoremstyle{definition}

\newcommand{\tuple}[1]{{\langle #1 \rangle}}

\newcommand{\ps}{\Diamond}
\newcommand{\domai}[1]{W^{#1}}
\newcommand{\nec}{\Box}
\newcommand{\lanfull}{\mathcal L}
\newcommand{\define}[1]{\emph{#1}}
\newcommand{\bfrm}[1]{#1^\nec}
\newcommand{\dfrm}[1]{#1^\ps}

\newcommand{\from}{\colon}

\newcommand{\veryshortarrow}{\to}

\newcommand{\ignore}[1]{}
\newcommand{\Model}{{\mathcal M}}
\newcommand{\Frame}{{\mathcal F}}
\newcommand{\mt}[1]{{#1}_{\rm top}}
\newcommand{\ms}[1]{{#1}_{\rm ds}}
\newcommand{\md}[1]{{#1}^{d}}
\newcommand{\mc}[1]{{#1}^{c}}

\newcommand{\ax}[1]{{\rm #1}}
\newcommand{\val}[1]{\|#1\|}
\newcommand{\peq}{\preccurlyeq}
\newcommand{\seq}{\succcurlyeq}
\newcommand{\rel}{\sqsubset}
\newcommand{\ler}{\sqsupset}
\newcommand{\brel}{\ll}
\newcommand{\bler}{\gg}

\usepackage{yfonts,array}
\usepackage{amssymb,amsfonts,amsmath,amsthm,units,nicefrac,stmaryrd}
\usepackage{cleveref}
\usepackage{upgreek,xcolor,bm,tikz,stackrel}
\usepackage[inline]{enumitem}
\usepackage{graphicx}
\usepackage[all]{xy}
\usepackage{tikz-cd}
\usepackage{multicol}
\usepackage{color}
\usepackage[inline]{enumitem}
\usepackage{mathtools}
\usepackage{tasks}
\usetikzlibrary{arrows}

\newcommand{\KF}{\mathsf{K4}}
\newcommand{\ckf}{\mathsf{CK4}}
\newcommand{\ikf}{\mathsf{IK4}}
\newcommand{\kfi}{\mathsf{K4I}}
\newcommand{\gkf}{\mathsf{GK4}}
\newcommand{\gkfc}{\mathsf{GK4^c}}
\newcommand{\SF}{\mathsf{S4}}

\newcommand{\csf}{\mathsf{CS4}}
\newcommand{\isf}{\mathsf{IS4}}
\newcommand{\sfi}{\mathsf{S4I}}
\newcommand{\gsf}{\mathsf{GS4}}
\newcommand{\gsfc}{\mathsf{GS4^c}}

\newcommand{\taui}{\tau_{\veryshortarrow}}
\newcommand{\taud}{\tau_\Diamond}
\newcommand{\taub}{\tau_\Box}
\newcommand{\inti}{i_{\veryshortarrow}}
\newcommand{\intd}{e_\Diamond}
\newcommand{\intb}{e_\Box}

\newcommand{\clod}{d_\Diamond}

\title{Polytopological Semantics for Intuitionistic Modal Logics}
\author{Juan P. Aguilera
\institute{Institute of Discrete Mathematics and Geometry\\
Technische Universit\"at Wien\\
Vienna, Austria}
\email{aguilera@logic.at}
\and
David Fernández-Duque
\institute{Department of Philosophy\\
University of Barcelona\\
Barcelona, Spain}
\email{fernandez-duque@ub.edu}
\and
Leonardo Pacheco
\institute{School of Computing\\Institute of Science Tokyo\\Tokyo, Japan}
\email{leonardovpacheco@gmail.com}
}

\newcommand{\titlerunning}{Polytopological Semantics for Intuitionistic Modal Logics}
\newcommand{\authorrunning}{J. Aguilera, D. Fernández-Duque \& L. Pacheco}

\hypersetup{
  bookmarksnumbered,
  pdftitle    = {\titlerunning},
  pdfauthor   = {\authorrunning},
  pdfsubject  = {EPTCS},               
  pdfkeywords = {non-classical modal logic, strong completeness, topological semantics} 
}

\begin{document}
\maketitle

\begin{abstract}
    We develop polytopological semantics for various constructive, intuitionistic, and Gödel--Dummett variations of $\mathsf{K4}$ and $\mathsf{S4}$. In our models, intuitionistic and modal operators are interpreted via various topologies over a single set, equipped with either the closure or derivative operators. We identify regularity conditions to ensure that spaces validate each of our target logics and prove that all the logics considered are sound and strongly complete with respect to their respective semantics.
\end{abstract}

\section{Introduction}
\label{section::introduction}

Both intuitionistic logic and the modal logics $\KF$ and $\SF$ (along with many related logics) enjoy topological semantics dating back to McKinsey and Tarski~\cite{mctarski}, predating even Kripke semantics.
It is only natural to expect for intuitionistic modal logics to also enjoy a topological interpretation, and indeed there have been some efforts in this direction~\cite{Davoren07,Fernandez-Duque18,degroot2025iS4,TomaszWitczak2019}.
However, none of these proposals simultaneously treat intuitionistic implication and both modalities via independent topologies according to the McKinsey--Tarski semantics.
This would mirror the relational setting, where semantics are given by structures $\tuple{W,\peq,\rel }$, consisting of a set equipped with two preorders, possibly satisfying additional confluence properties.
A natural topological adaptation would be to equip a set with distinct topologies $\taui$ and $\taud$.
This raises the question of whether this leads to a reasonable interpretation of constructive or intuitionistic logic and what, if any, compatibility conditions on the topologies are required to obtain a well-behaved semantics.

As we will see, a triple of topologies $\taui$, $\taud$, and $\taub$ on a single set give rise for an interpretation of $\csf$, with the only caveat that $\taub$ should be a subset of the meet of $\taui$ and $\taud$, \emph{i.e.}, via $\tau_\Box \subseteq \taui \wedge \taud$.
The latter condition mimics the situation for relational semantics, where $\Box$ is interpreted by quantifying over both $\peq$ and $\rel$, as required to obtain semantics satisfying the usual upwards persistence of intuitionistic truth.
However, if we wish to provide semantics for stronger logics, such as the well-studied $\isf$, additional constraints are needed governing the interaction between the topologies.
Likewise, non-classical versions of $\KF$ can be obtained by interpreting the modalities as the Cantor derivative over spaces satisfying the $T_d$ separation axiom.

For each of the logics $\ckf,\ikf,\kfi,\gkf,\gkfc,\csf,\isf,\sfi,\gsf,\gsfc$, we define a collection of polytopological models.
We prove strong completeness results for each of these logics over their respective Alexandroff models based on old and new strong completeness results.
Recall that Alexandroff spaces are topological spaces whose open sets are the upsets of some topology, and for logics above $\SF$ allow us to interpret Kripke frames as a special case of topological spaces.

We note that many of the completeness results for birelational Kripke semantics we prove have appeared before.
The strong completeness for the intuitionistic logics $\ikf$ and $\isf$ already appear in \cite{servi1984axiomatizations}.
Similarly, strong completeness for variants of $\SF$ already appeared \cite{balbiani2024variants}.
A completeness results for $\ckf$ with respect to a sequent calculus appears in \cite{ArisakaDS15}, but no semantic result is described there.
As far as we are aware, the logics $\kfi$, $\gkf$, and $\gkfc$ have not been studied before.

\paragraph{Outline}
On Section~\ref{sec::syntax}, we briefly describe the syntax and logics studied in this paper.
On Section~\ref{sec::general-semantics}, we define a general polyderivative semantics which will cover both birelational and polytopological semantics.
On Section~\ref{sec::soundness}, we prove the soundness of each logic with respect to their polyderivative models.
On Section~\ref{sec::birelational-semantics}, we define birelational models for each logic.
On Section~\ref{sec::topological-completeness}, we define polytopological models for each logic and transfer the strong completeness results from birelational to polytopological semantics.
On Section~\ref{sec::conclusion}, we discuss some directions for future work.

\section{Syntax}
\label{sec::syntax}

We will work within a standard language of intuitionistic modal logic, which has the peculiarity that,  unlike classical modal logic, $\Box$ and $\Diamond$ are not inter-definable.
Fix a set $\mathrm{Prop}$ of proposition symbols.
The \emph{modal formulas} are defined by the following grammar:
\[
    \varphi := \bot \mid p\mid \varphi\land\varphi  \mid \varphi\lor\varphi \mid \varphi\to\varphi \mid \Box\varphi \mid \Diamond\varphi.
\]
As usual, we define $\neg\varphi:= \varphi\to\bot$ and $\top:=\bot\to\bot$.

\begin{definition}
    A \emph{(modal) logic} is a set of formulas closed under necessitation and \emph{modus ponens,} defined as follows:
    \begin{multicols}{2}
    \begin{description}
        \item[\ax{Nec}]  $:=\frac{\varphi}{\Box\varphi}$,
        \item[{\ax{MP}}] $:=\frac{\varphi \;\;\; \varphi\to\psi}{\psi}$.
    \end{description}
    \end{multicols}
    $\ckf$ is the least logic containing the all intuitionistic tautologies, and the axioms:
    \begin{multicols}{2}  
    \begin{description}
        \item[${\ax K}_\Box$] $:= \Box(\varphi\to\psi) \to (\Box\varphi \to \Box \psi)$,
        \item[${\ax 4}_\Box$] $ := \Box\varphi \to \Box\Box\varphi$,
        \item
        \item[${\ax K}_\Diamond$] $:= \Box(\varphi\to\psi) \to (\Diamond\varphi \to \Diamond \psi)$,
        \item[${\ax 4}_\Diamond$] $ := \Diamond\Diamond\varphi \to \Diamond \varphi$,
        \item[\ax{N}] $  := \neg \Diamond \bot$.
    \end{description}
     \end{multicols}
\end{definition}

\begin{remark}
    Note that \ax{N} is typically not included in $\ckf$, but we add it in order to obtain a more uniform treatment of the various logics we consider.
    Semantically, it corresponds to the condition that $\bot$ is false on all possible worlds, \emph{i.e.}, that models are {\em infallible}; such logics have been studied by Wijesekera~\cite{wijesekera1990constructive}.
    In the presence of transitive modalities, constructive logics are naturally embeddable into Wijesekera logics~\cite{SantiagoFJ}, so there is no loss of generality in working with the latter.
\end{remark}

All other logics we consider are extensions of $\ckf$, with combinations of the following axioms:
\begin{multicols}{2}
\begin{description}
    \item[\ax{$T_\Box$}] $:= \Box \varphi\to \varphi$,
    \item[\ax{FS}] $ := (\Diamond \varphi \to \Box\psi) \to \Box(\varphi\to\psi)$,
    \item[\ax{DP}] $ := \Diamond (\varphi\lor\psi) \to \Diamond\varphi\lor\Diamond\psi$,
    \item[\ax{$T_\Diamond$}] $:= \varphi\to\Diamond\varphi$,
    \item[\ax{RV}] $ := \Box(\varphi\lor\psi) \to \Box\varphi\lor\Diamond\psi$,
    \item[\ax{GD}] $ := (\varphi\to\psi) \lor (\psi\to\varphi)$.
\end{description}
\end{multicols}
\noindent Above, \ax{FS} stands for {\em Fischer Servi,} \ax{DP} for {\em disjunctive possibility,} \ax{RV} for {\em Rodríguez--Vidal}, and \ax{GD} for {\em G\"odel--Dummett}.

\begin{definition}
    We define the following extensions of $\ckf$:
    \begin{multicols}2
    \begin{description}
        \item[]
        \item[$\ikf$] $ := \ckf + \{\ax{FS}, \ax{DP} \}$,
        \item[$\kfi$] $  := \ckf + \{\ax{RV}, \ax{DP} \}$,
        \item[$\gkf$] $  := \ikf + \{\ax{GD}\}$,
        \item[$\gkfc$] $  := \gkf + \{\ax{RV}\}$,
        \item[$\csf$] $:= \ckf + \{ \ax{T_\Box}, \ax{T_\Diamond} \}$,
        \item[$\isf$] $ := \csf + \{\ax{FS}, \ax{DP} \}$,
        \item[$\sfi$] $  := \csf + \{\ax{RV}, \ax{DP} \}$,
        \item[$\gsf$] $  := \isf + \{\ax{GD}\}$,
        \item[$\gsfc$] $  := \gsf + \{\ax{RV}\}$.
                
    \end{description}
    \end{multicols}
    Above, $\sf c$ stands for `crisp' according to its use in fuzzy logic~\cite{RodriguezV21}.
    Let $\Lambda$ be any of the logics described above.
    We say $\Lambda$ is $\Diamond$-regular if $\ax{DP} \in \Lambda$.
    If $\Gamma$ is a set of formulas and $\varphi$ is a formula, we write $\Gamma\vdash_\Lambda\varphi$ if there are formulas $\psi_0,\dots, \psi_n$ such that $(\psi_0\land \cdots \wedge\psi_n)\to\varphi\in\Lambda$.
\end{definition}

\section{General Semantics}
\label{sec::general-semantics}

In this section, we describe generalized polyderivative semantics which generalize both the birelational Kripke semantics and the polytopological semantics we describe later.
This is particularly crucial for variations of $\KF$, since in this logic Kripke semantics is not a special case of topological semantics.

\begin{definition}
    If $X$ is a non-empty set, $d\colon\mathcal P(X) \to \mathcal P(X)$ is a {\em $T_d$-derivative operator} if, for all $A,B\subseteq X$:
    \begin{itemize}
        \item $d(\varnothing) = \varnothing$,
        \item $d(A\cup B) = d(A)\cup d(B)$,
        \item $d(d(A)) \subseteq d(A)$.
    \end{itemize}
    We say $d$ is a {\em closure} operator if moreover $A\subseteq d(A)$ for all $A\subseteq X$, and say that $A$ is {\em $d$-closed} if $d(A)\subseteq A$.
    Meanwhile, $e\colon\mathcal P(X) \to \mathcal P(X)$ is a {\em $T_d$-integral operator} if, for all $A,B\subseteq X$:
    \begin{itemize}
        \item $e(X) = X$,
        \item $e(A\cap B) = e(A) \cap e(B)$,
        \item $e(A) \subseteq e(e(A))$.
    \end{itemize} 
    We say $e$ is an {\em interior} operator if moreover $e(A)\subseteq A$ for all $A\subseteq X$, and say that $A$ is {\em $e$-open} if $A\subseteq e(A)$.
    
    Every derivative operator gives rise to an integral operator (and \emph{vice-versa}) by dualising in the usual way, and we define $d$-open and $e$-closed sets according to this dual.
    We typically denote closure and interior operators by the letters $c$ and $i$ instead of $d$ and $e$, respectively.
\end{definition}

Below, we refer to $T_d$-derivative and $T_d$-integral operators simply as ``derivative'' or ``integral'' operators.
We also omit parenthesis when not ambiguous, \emph{e.g.}, write $eA$ instead of $e(A)$.

\begin{definition}
    A polyderivative $\ckf$-frame is a tuple $\tuple{X, \inti, d_\Diamond, e_\Box}$, where $\inti$ is an interior operator, $d_\Diamond$ is a derivative operator, and $e_\Box$ is an integral operator, all over $X$.
    We let $\taui$, $\taud$, $\taub$ be the respective topologies (of open sets) and moreover demand that $e_\Box \inti = \inti e_\Box \inti $ and $e_\Box \inti = i_\Diamond e_\Box \inti $.
    
    We obtain polyderivative $\ckf$-models by adding a valuation function $\val{\cdot}$ taking propositional variables $p$ to open sets in $\taui$.
\end{definition}

\begin{definition}
    Let $\Model$ be a polyderivative $\ckf$-model.
    We extend $\val\cdot = \val\cdot^\Model$ to arbitrary formulas by structural induction as follows:
    \begin{multicols}{2}
    \begin{itemize}
        \item $\val{\bot}              = \emptyset$,
        \item $\val{\varphi\land\psi}  = \val{\varphi}  \cap \|\psi\|$,
        \item $\val{\Box\varphi}       = \intb\|\varphi\|$,
        \item $\val{\varphi\to\psi}    = \inti\big ((X\setminus\|\varphi\| ) \cup \|\psi\| \big)$,
        \item $\val{\varphi\lor\psi}   = \|\varphi\| \cup \|\psi\| $,
        \item $\val{\Diamond\varphi}   = \inti \clod \|\varphi\|$.
    \end{itemize}
    \end{multicols}
    \noindent If $x\in X$, we may write $\mathcal M,x\models\varphi$ instead of $x\in\val\varphi$; or simply $ x\models\varphi$, if $\mathcal M$ is clear from context.
\end{definition}

In practice, derivative and integral operators are typically derived from some additional structure: specifically, they may be induced from transitive relations or from topologies, as we will see in later sections.

\begin{definition}\label{defTopProp}
    Let $\mathcal {X} = \tuple{X, \inti, d_\Diamond, e_\Box}$ be a polyderivative $\ckf$-space.
    We say that ${\mathcal X}$ is:
    \begin{itemize}
        \item \emph{$\Diamond$-Coarse} iff $\inti \intd = \intb\inti$,
        \item \emph{$\Diamond$-Regular} iff $\clod\inti =  \inti\clod\inti$, 
        \item \emph{$\Box$-Regular} iff $\intd \inti = \intb\inti$, 
        \item \emph{extremally disconnected} iff $c_\to i_\to = i_\to$,
        \item \emph{hereditarily extremally disconnected} iff $\taui$ is each subspace of $\mathcal{X}$ is extremally disconnected.
    \end{itemize}
    If $\Model = \tuple{X, \taui, \taud, V}$, is a polyderivative $\ckf$-model, then each of the properties is inherited from $\tuple{X, \taui, \taud, V }$.
\end{definition}


It is known that certain confluence properties allow us to interpret modalities `classically'~\cite{balbiani2024variants}; the same holds for regularity properties.
The following is easy to check from the definitions, using the fact that $\val\varphi$ is always $\taui$-open.
\begin{lemma}
    \label{lemmTReg}
    Let $\Model$ be a polyderivative $\ckf$-model and $\varphi$ be any formula.
    \begin{enumerate}
        \item\label{itTreg1} If $\Model$ is $\Diamond$-regular, then $\val{\Diamond\varphi} = \clod \val\varphi$.
        \item\label{itTreg2} If $\Model$ is $\Box$-regular, then $\val{\Box\varphi} = \intd \val\varphi$.
    \end{enumerate}
\end{lemma}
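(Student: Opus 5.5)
The plan is to unfold the semantic clauses and apply the defining identities of the two regularity conditions. The key fact, which I would verify first, is that $\val\varphi$ is $\taui$-open for every formula $\varphi$, that is, $\val\varphi = \inti\val\varphi$. This goes by a routine induction on $\varphi$. Propositional variables are $\taui$-open by the definition of valuation, and $\bot$ gives $\varnothing$. Conjunctions and disjunctions are open because $\inti$ is an interior operator, so finite intersections and unions of open sets are open. Implication and $\Diamond$ are of the form $\inti(\cdot)$, which is open by idempotence of the interior operator. For $\Box$, the frame condition $e_\Box\inti = \inti e_\Box\inti$ together with the induction hypothesis gives $\intb\val\varphi = \intb\inti\val\varphi = \inti\intb\inti\val\varphi = \inti\intb\val\varphi$. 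The paper says this fact is already being used, so I would just cite it.

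For item~\ref{itTreg1}, I would compute
\[
\val{\Diamond\varphi} = \inti\clod\val\varphi = \inti\clod\inti\val\varphi = \clod\inti\val\varphi = \clod\val\varphi.
\]
The first equality is the semantic clause, the second uses openness of $\val\varphi$, the third is $\Diamond$-regularity ($\clod\inti = \inti\clod\inti$) read right to left, and the last uses openness again.

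For item~\ref{itTreg2}, I would similarly compute
\[
\val{\Box\varphi} = \intb\val\varphi = \intb\inti\val\varphi = \intd\inti\val\varphi = \intd\val\varphi,
\]
using $\Box$-regularity ($\intd\inti = \intb\inti$).

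There is no real obstacle here. The only step requiring care is the openness induction, specifically the $\Box$ case, which is exactly where the frame condition $e_\Box\inti = \inti e_\Box\inti$ is needed.
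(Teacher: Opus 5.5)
Your proof is correct and matches the paper, which only remarks that the lemma follows from the definitions once one knows that $\val\varphi$ is always $\taui$-open. Your explicit induction for that openness fact, including the $\Box$ case via the frame condition $\intb\inti = \inti\intb\inti$, fills in a detail the paper leaves implicit.
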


Next, we use the bitopological regularity properties to define classes of spaces corresponding to some intuitionistic modal logics.
\begin{definition}\label{defTopLogic}
    If $\mathcal{X} = \tuple{X, \inti, d_\Diamond, e_\Box}$ is a polyderivative $\ckf$-space, we say that ${\mathcal X}$ is a(n):
    \begin{itemize}
        \item \emph{$\ikf$-space} iff it is $\Diamond$-coarse and $\Diamond$-regular,
        \item \emph{$\kfi$-space} iff it is $\Box$-regular and $\Diamond$-regular,
        \item \emph{$\gkf$-space} iff it is a hereditarily extremally disconnected $\ikf$-space,
        \item \emph{$\gkfc$-space} iff it is a $\Box$-regular $\gkf$-space.
    \end{itemize}
    We say $\mathcal{X}$ is a polyderivative $\csf$-space iff $d_\Diamond$ and $e_\Box$ are closure and interior operators, respectively.
    The definition of $\isf$-, $\sfi$-, $\gsf$-, and $\gsfc$-spaces is analogous. 

    Whenever $\Lambda$ is any of the logics defined above, we say that a polyderivative model $\Model$ is a {\em polyderivative $\Lambda$-model} if it is based on a $\Lambda$-space.
\end{definition}

\section{Soundness}
\label{sec::soundness}

Let us now see that the regularity properties we have identified lead to soundness for intuitionistic modal logics.
\begin{proposition}
    \label{prop::soundness-various}
    Any polyderivative $\ckf$-space $\mathcal{X}$ validates all intuitionistic tautologies, $\ax{K}_\Box$, $\ax{K}_\Diamond$, $\ax{4}_\Box$, $\ax{4}_\Diamond$, $\ax{Nec}$, and $\ax{MP}$.
    If $\mathcal{X}$ is also a $\csf$-space, then $\mathcal{X}$ also validates $\ax{T}_\Box$ and $\ax{T}_\Diamond$.
\end{proposition}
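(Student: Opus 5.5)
The plan is to verify each item directly against the semantic clauses, treating truth sets as subsets of $X$. Two facts carry most of the work. First, every $\val\varphi$ is $\taui$-open, which follows by induction on $\varphi$. The propositional cases are the standard McKinsey--Tarski ones. For $\Box\varphi$, since $\val\varphi=\inti\val\varphi$, the frame condition gives $\intb\val\varphi=\intb\inti\val\varphi=\inti\intb\inti\val\varphi$, which is $\taui$-open. For $\Diamond\varphi$, openness is immediate because the clause is $\inti\clod\val\varphi$. Second, $\val{\varphi\to\psi}=X$ iff $\val\varphi\subseteq\val\psi$, since $\inti$ is an interior operator.

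Given these, intuitionistic tautologies are valid because the $\{\to,\wedge,\vee,\bot\}$ fragment is evaluated exactly as in the topological semantics of intuitionistic logic on $(X,\taui)$. Modalised subformulas can be treated as fresh atoms, because their values are $\taui$-open, and substitution of open sets preserves validity. For \ax{MP}: if $\val\varphi=X$ and $\val\varphi\subseteq\val\psi$, then $\val\psi=X$. For \ax{Nec}: $\intb X=X$.

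For the modal axioms I will use monotonicity of $\intb$ and $\clod$, which follows from finite additivity and multiplicativity. For ${\ax K}_\Box$, note $\val{\varphi\to\psi}\cap\val\varphi\subseteq\val\psi$, so
\[
\intb\val{\varphi\to\psi}\cap\intb\val\varphi=\intb(\val{\varphi\to\psi}\cap\val\varphi)\subseteq\intb\val\psi .
\]
Since the left side is $\taui$-open, it is contained in $\inti$ of the implication set, which gives validity. For ${\ax K}_\Diamond$, I need $\intb\val{\varphi\to\psi}\cap\inti\clod\val\varphi\subseteq\inti\clod\val\psi$, and this is the step I expect to be the main obstacle. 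Write $U=\intb\val{\varphi\to\psi}$. By the second frame condition, $U=\intd U$, so $U$ is $\taud$-open, i.e.\ $U\subseteq\intd U$. Then I use the standard derivative fact for $d$-open $U$, namely $U\cap\clod A\subseteq\clod(U\cap A)$. Proof: $\clod A\subseteq\clod(A\cap U)\cup\clod(A\setminus U)$, and $\clod(A\setminus U)\subseteq\clod(X\setminus U)=X\setminus\intd U\subseteq X\setminus U$. Using also $U\subseteq\val{\varphi\to\psi}$, I get $U\cap\clod\val\varphi\subseteq\clod(\val\varphi\cap\val{\varphi\to\psi})\subseteq\clod\val\psi$. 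Intersecting with the $\taui$-open set $\inti\clod\val\varphi$ and using that $U$ is $\taui$-open, the left side is a $\taui$-open subset of $\clod\val\psi$, hence lies inside $\inti\clod\val\psi$.

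For ${\ax 4}_\Box$: since $\val\varphi$ is $\taui$-open, $\intb\val\varphi\subseteq\intb\intb\val\varphi$, and both sides are already values of formulas. For ${\ax 4}_\Diamond$: monotonicity and $\inti\subseteq\mathrm{id}$ give $\inti\clod\inti\clod\val\varphi\subseteq\inti\clod\clod\val\varphi\subseteq\inti\clod\val\varphi$. For \ax N, $\inti\clod\varnothing=\varnothing$.

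Finally, when $\clod$ is a closure operator and $\intb$ an interior operator, $\ax T_\Box$ follows from $\intb\val\varphi\subseteq\val\varphi$. For $\ax T_\Diamond$, $\val\varphi=\inti\val\varphi\subseteq\inti\clod\val\varphi$.
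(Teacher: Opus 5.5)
The gap is in your argument for ${\ax K}_\Diamond$. The rest of your verification is correct, and for ${\ax 4}_\Diamond$, the only case the paper writes out, your computation is the paper's.

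The failing step is ``using also $U\subseteq\val{\varphi\to\psi}$''. It needs $\intb A\subseteq A$, which holds only when $\intb$ is an interior operator, i.e.\ for $\csf$-spaces. There your argument is correct: $U$ is $\taud$-open, and $U\cap\clod A\subseteq\clod(U\cap A)$ is the usual fact about open sets and closure. In a polyderivative $\ckf$-space, however, $\intb$ is merely a $T_d$-integral operator (a Cantor integral, or $e_R$ for an irreflexive $R$), so $x\in\intb A$ does not give $x\in A$. The $\taud$-openness of $U$ yields $U\cap\clod\val\varphi\subseteq\clod(U\cap\val\varphi)$, but nothing then places $U\cap\val\varphi$ inside $\val\psi$.

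This cannot be patched from the two stated frame conditions alone. They only say that $\intb\inti A$ is $\taui$- and $\taud$-open, and in the derivative setting a topology does not determine its integral operator. Take $X=\{x\}$, $\inti=\clod=\mathrm{id}$, and $\intb A=X$ for all $A$ (the dual of the constant-$\varnothing$ derivative). Both conditions hold trivially. Yet $\val{\Box(\top\to\bot)}=\intb\varnothing=X$, $\val{\Diamond\top}=X$ and $\val{\Diamond\bot}=\varnothing$, so ${\ax K}_\Diamond$ fails at $x$. The paper's proof only calls these cases standard, so it does not supply the missing step either; its soundness proof for \ax{FS} tacitly uses the comparison needed here.

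What is needed is a link between the operators themselves, such as $\intb\inti A\subseteq\intd\inti A$. This holds in the models the paper actually uses:
\begin{itemize}
\item in $\ms{\Frame}$, $e_\brel\subseteq e_\rel$ because ${\rel}\subseteq{\brel}$ by reflexivity of $\peq$;
\item for Cantor integrals, it follows from $\taub\subseteq\taud$.
\end{itemize}
Given this inclusion, apply your own derivative computation directly to $A=\val\varphi$ and $B=\val{\varphi\to\psi}$. We have $\clod A\subseteq\clod(A\cap B)\cup\clod(X\setminus B)$ and $\clod(X\setminus B)=X\setminus\intd B$. Hence $\intb B\cap\clod A\subseteq\intd B\cap\clod A\subseteq\clod(A\cap B)\subseteq\clod\val\psi$. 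The $\taui$-openness of $\intb B\cap\inti\clod A$ then finishes the argument as you intended.
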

\begin{proof}
    Most of these are standard or straightforward; see for example \cite{mctarski}.
    
    For illustration,  we show the case for the axiom $\ax{4}_\Diamond$.
    First note that, for any set $A\subseteq X$, as $\inti(A) \subseteq A$ and $\clod$ distributes over conjunctions, $\clod(\inti(A)) \subseteq \clod(A \cup \inti(A)) =  \clod(A)$.
    We then have
    $
        \|\Diamond\Diamond\varphi\|  = \inti \clod \inti \clod(U)  
                                     \subseteq \inti \clod \clod(U)  
                                     = \inti \clod(U)  
                                     = \|\Diamond\varphi\|.  
    $
\end{proof}

The rest of the axioms we consider are sound only for certain classes of spaces.

\begin{proposition}
    \label{prop::soundness-fs}
    Let ${\mathcal X}$ be a polyderivative $\ckf$-space.  
    \begin{enumerate}
        \item \label{itDReg} If ${\mathcal X}$ is $\Diamond$-regular, then ${\mathcal X}$ validates $\ax{DP}$.
        \item\label{itDRegBC} If ${\mathcal X}$ is $\Diamond$-regular and $\Diamond$-coarse, then ${\mathcal X}$ validates $\ax{FS}$.\footnote{Compare with the case for birelational semantics: downward confluence is not sufficient to satisfy $\ax{FS}$, but in the presence of forward confluence it is. (See \cite{degroot2025ckik}.)}
        \item\label{itBoxR}  If ${\mathcal X}$ is $\Box$-regular and $\Diamond$-regular, then $\mathcal{X}$ validates $\ax{RV}$.
        \item\label{itHed} If ${\mathcal X}$ is hereditarily extremally disconnected, then ${\mathcal X}$ validates $\ax{GD}$.
    \end{enumerate}
\end{proposition}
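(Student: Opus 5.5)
We take each item separately. Throughout, write $U=\val\varphi$ and $V=\val\psi$; both are $\taui$-open. We will use Lemma~\ref{lemmTReg} to replace $\val{\Diamond\varphi}$ by $\clod U$ whenever $\Diamond$-regularity holds, and $\val{\Box\varphi}$ by $\intd U$ whenever $\Box$-regularity holds.

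For \ref{itDReg}, Lemma~\ref{lemmTReg} gives $\val{\Diamond(\varphi\lor\psi)}=\clod(U\cup V)$. Additivity of $\clod$ turns this into $\clod U\cup\clod V=\val{\Diamond\varphi\lor\Diamond\psi}$. Since the first set is contained in the second, the implication evaluates to $\inti$ of a set containing $(X\setminus A)\cup A$, which is $X$.

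For \ref{itBoxR}, assume both regularities. Then $\val{\Box(\varphi\lor\psi)}=\intd(U\cup V)$, and we must show $\intd(U\cup V)\subseteq \intd U\cup \clod V$. Take $x\in\intd(U\cup V)$ with $x\notin\clod V$. Dually, $x\in \intd(X\setminus V)$. Since $\intd$ preserves intersections, $x\in\intd((U\cup V)\cap(X\setminus V))\subseteq \intd U$. Validity then follows as in \ref{itDReg}.

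For \ref{itDRegBC}, we need $\inti\big((X\setminus\clod U)\cup \intb V\big)\subseteq \intb\inti((X\setminus U)\cup V)$. The strategy is to show that every $\taui$-open $O$ with $O\cap \clod U\subseteq \intb V$ satisfies $O\subseteq \intb W$, where $W:=\inti((X\setminus U)\cup V)$. We would first try to prove the set inclusion $O\subseteq \intd(X\setminus U)\cup\ldots$, splitting $O$ into the parts inside and outside $\clod U$. The cleaner route is to use coarseness. Since $O=\inti O$ and $\val{\Box\cdot}$ values are $\taui$-open, it suffices to get $O\subseteq \inti\intd W'$ for a suitable $W'\subseteq W$, and then rewrite this as $\intb\inti W'$ via $\Diamond$-coarseness. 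Concretely, we aim to show $O\subseteq \intd\big((X\setminus U)\cup V\big)$, handled pointwise:
\begin{itemize}
\item on $O\setminus\clod U$ we have $O\subseteq\inti$ of the complement of $\clod U$ there, and $\Diamond$-regularity, $\clod\inti=\inti\clod\inti$, makes $X\setminus\clod U$ $\clod$-closed, hence $\intd$-open-like, so these points lie in $\intd(X\setminus U)$;
\item on $O\cap\clod U$ we use $\intb V\subseteq\intd V$, which is available since $\taub\subseteq\taud$ by the frame conditions.
\end{itemize}
Combining the two pieces with $\intd$ preserving intersections, and then applying $\inti$ together with the $\taui$-openness of $W$, we would obtain $O\subseteq\inti\intd W=\intb\inti W=\intb W$. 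This item is the main obstacle: the bookkeeping between $\intd$, $\inti$ and $\intb$ has to be arranged carefully, and the two conditions enter exactly at the steps indicated.

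For \ref{itHed}, we use the standard fact that a space validates $\ax{GD}$ iff it is hereditarily extremally disconnected. The direction needed here runs as follows. Since the valuation depends only on $\taui$ for $\to$, we argue in $\taui$ alone. Set $A=\val{\varphi\to\psi}$ and $B=\val{\psi\to\varphi}$, and suppose $x\notin A\cup B$. Consider the subspace $Y=X\setminus(U\cap V)$, or more directly use the known equivalence of $\ax{GD}$ with hereditary extremal disconnectedness from the topological semantics of Gödel--Dummett logic. We expect to cite this equivalence rather than reprove it.
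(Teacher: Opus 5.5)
Your proposal is correct and follows the paper's proof item by item. Items 1 and 3 are the same computations; your item 3 uses Lemma~\ref{lemmTReg} to work with $\intd$ on both sides, so it never applies $\Box$-regularity to the non-open set $X\setminus V$, which makes it cleaner than the paper's version. Item 2 is the paper's chain $\inti\bigl((X\setminus\clod U)\cup\intb V\bigr)\subseteq\inti\bigl(\intd(X\setminus U)\cup\intd V\bigr)\subseteq\inti\intd\bigl((X\setminus U)\cup V\bigr)=\intb\inti\bigl((X\setminus U)\cup V\bigr)$, and item 4 is cited from Bezhanishvili et al.\ exactly as in the paper.

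One correction in the first bullet of item 2: $X\setminus\clod U=\intd(X\setminus U)$ holds by duality alone. Your claim that $\Diamond$-regularity makes $X\setminus\clod U$ $\clod$-closed is false in general and not needed. Regularity enters only through $\val{\Diamond\varphi}=\clod U$.
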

\begin{proof}
    Write ${\mathcal X}=\tuple{X,\taui, d_\Diamond, e_\Box}$ and let $\val\cdot$ be any valuation on ${\mathcal X}$.
    \medskip
    
    \noindent\ref{itDReg}.
    Suppose that ${\mathcal X}$ is $\Diamond$-regular.
    By Lemma~\ref{lemmTReg}(\ref{itTreg1}), $\val{\Diamond\theta} = \clod\val\theta$ for any formula $\theta$.
    Then:
    \[        \|\Diamond(\varphi\lor\psi)\|
                = \clod(\|\varphi\|\cup \|\psi\|)
                 = \clod \|\varphi\|  \cup \clod \|\psi\|
                = \|\Diamond\varphi\lor\Diamond\psi\|,
    \]
    where the second equality follows from the properties of derivative operators.
    \medskip

    \noindent \ref{itDRegBC}.
    Suppose that $\mathcal{X}$ is $\Diamond$-regular and $\Diamond$-coarse. Let $x \in\|\Diamond\varphi \to \Box\psi\|$.
    The $\Diamond$-regularity of $\mathcal{X}$ implies $x\in \inti(W\setminus \clod(\|\varphi\|) \cup \intb(\|\psi\|))$.
    From the duality of derivative and integral operators along with the fact that $\tau_\Box \subseteq \tau_\Diamond$, we get $x\in \inti(\intd(W\setminus \|\varphi\|) \cup \intd(\|\psi\|))$.
    Note that, $A$, $e(A) = e(A) \cap e(A\cup B)$, the union of two sets obtained by integrals is a subset of the integral of union of the original sets.
    We thus get $x\in \inti(\intd((W\setminus \|\varphi\|) \cup \|\psi\|))$.
    As $\mathcal{X}$ is $\Diamond$-coarse, $x\in \intb(\inti((W\setminus \|\varphi\|) \cup \|\psi\|))$.
    Therefore $x\in \|\Box(\varphi\to\psi)\|$.
    \medskip

    \noindent\ref{itBoxR}.
    Suppose that ${\mathcal X}$ is $\Diamond$- and $\Box$-regular.
    By Lemma \ref{lemmTReg}, we get $\|\Box (\varphi\lor\psi)\| = \intb(\|\varphi\|\cup \|\psi\|)$.
    Let $x\in\intb(\|\varphi\|\cup \|\psi\|)$ and $x\not\in d_\Diamond(\|\psi\|)$, then $x\in e_\Diamond(W\setminus\|\psi\|)$.
    By $\Box$-regularity, we get $x\in \intb(\|\varphi\|\cup \|\psi\|) \cap e_\Box(W\setminus\|\psi\|)$ and so $x\in \intb(\|\varphi\|)$.
    Therefore $\intd(\|\varphi\|\cup \|\psi\|) \subseteq \intb \|\varphi\|  \cup \clod \|\psi\|$.
    By \ref{lemmTReg} again, we get $\|\Box (\varphi\lor\psi)\| = \|\Box\varphi\lor\Diamond\psi\|$.
    \medskip
    
    \noindent\ref{itHed}.
    By Bezhanishvili \emph{et al.} \cite{bezhanishvili2015locallylinear}, Proposition 3.1. \qedhere
    
\end{proof}

By Propositions~\ref{prop::soundness-various} and ~\ref{prop::soundness-fs}, we have:
\begin{lemma}
    \label{lem::soundness}
    Let $\Lambda \in \{\ckf,\ikf,\kfi, \gkf, \gkfc, \csf,\isf,\sfi, \gsf, \gsfc \}$.
    Then $\Lambda$ is sound for the class of polyderivative $\Lambda$-models.
\end{lemma}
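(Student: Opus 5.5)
The plan is to read soundness as: if $\Gamma\vdash_\Lambda\varphi$, then every point of every polyderivative $\Lambda$-model satisfying all of $\Gamma$ satisfies $\varphi$. By the definition of $\vdash_\Lambda$, it suffices to show that every $\chi\in\Lambda$ is valid on every polyderivative $\Lambda$-model, i.e.\ $\val\chi = X$. Indeed, if $(\psi_0\wedge\cdots\wedge\psi_n)\to\varphi\in\Lambda$ has truth set $X$ and $x\in\val{\psi_i}$ for all $i$, then $x\in\inti\big((X\setminus\val{\psi_0\wedge\cdots\wedge\psi_n})\cup\val\varphi\big)\subseteq(X\setminus\val{\psi_0\wedge\cdots\wedge\psi_n})\cup\val\varphi$, so $x\in\val\varphi$, since $\inti$ is an interior operator.

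To show that every theorem is valid, we induct on the construction of $\Lambda$ as the least set of formulas containing the relevant axioms and closed under $\ax{MP}$ and $\ax{Nec}$. Validity here means being valid in every model on the frame, uniformly over valuations, so instances of axiom schemes are covered automatically. The rules are handled by Proposition~\ref{prop::soundness-various}: if $\val\varphi = X$ and $\val{\varphi\to\psi}=X$, then $X=\inti((X\setminus X)\cup\val\psi)\subseteq\val\psi$; and $\val{\Box\varphi}=\intb X = X$ because $e(X)=X$ for integral operators.

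For the axioms we go through the list of logics and match the defining frame conditions of Definition~\ref{defTopLogic} against the items of Proposition~\ref{prop::soundness-fs}:
\begin{itemize}
\item the base axioms of $\ckf$, including $\ax N$, together with $\ax T_\Box$ and $\ax T_\Diamond$ for the $\csf$ family, come from Proposition~\ref{prop::soundness-various};
\item $\ax{DP}$ holds in all $\Diamond$-regular spaces, which covers every logic except $\ckf$ and $\csf$;
\item $\ax{FS}$ holds because $\ikf$-, $\gkf$- and $\gkfc$-spaces (and their $\SF$ analogues) are $\Diamond$-coarse and $\Diamond$-regular;
\item $\ax{RV}$ holds because $\kfi$- and $\gkfc$-spaces (and their $\SF$ analogues) are $\Box$- and $\Diamond$-regular;
\item $\ax{GD}$ holds because $\gkf$- and $\gkfc$-spaces are hereditarily extremally disconnected.
\end{itemize}
The $\SF$-type spaces are the $\KF$-type spaces with closure/interior operators added, so they inherit all of these conditions.

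The one point needing care is $\ax N$, which Proposition~\ref{prop::soundness-various} does not list explicitly. We check it directly: $\val{\Diamond\bot}=\inti\clod\varnothing=\inti\varnothing=\varnothing$, hence $\val{\neg\Diamond\bot}=\inti(X)=X$. Apart from this, the argument is bookkeeping, and I expect no real obstacle.
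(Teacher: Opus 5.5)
Your proof is correct and follows the paper, which obtains the lemma directly from Propositions~\ref{prop::soundness-various} and~\ref{prop::soundness-fs} by matching the conditions defining each $\Lambda$-space against the extra axioms of $\Lambda$. Your additional pieces are valid and fill in details the paper leaves implicit: the checks of $\ax{MP}$ and $\ax{Nec}$, the reduction from validity of theorems to local consequence, and the direct verification of $\ax{N}$, which Proposition~\ref{prop::soundness-various} in fact omits from its list.
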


\section{Birelational semantics}
\label{sec::birelational-semantics}

The more familiar semantics for intuitionistic modal logics arises from birelational structures (see e.g.~\cite{AlechinaMPR01,servi1977modal}.
Birelational semantics are incomparable to polytopological semantics, but they are in fact a special case of polyderivative semantics, provided that the modal relation is weakly transitive.
In this section, we make this relationship precise.

\begin{definition}
    \label{def::csf-model}
    A birelational $\ckf$-frame is a tuple $\mathcal F=\tuple{\domai \Frame ,\peq^\Frame, \rel^\Frame}$, where:
    \begin{itemize}
        \item $\domai \Frame$ is the set of \emph{possible worlds},
        \item the \emph{intuitionistic relation} $\peq^\Frame$ is a preorder (i.e., a reflexive and transitive relation) on $\domai \Frame$, and
        \item the \emph{modal relation} $\rel^\Frame$ is a transitive relation on $\domai \Frame$.
    \end{itemize} 
    A {\em $\ckf$-model} is a tuple $\Model = \tuple{\domai \Model,\peq^\Model,\rel^\Model,\val\cdot^\Model}$, consisting of a $\ckf$-frame equipped with a  {\em valuation} $\val\cdot^\Model :\mathrm{Prop}\to \mathcal{P}(\domai \Model)$ such that, for all $p\in \mathrm{Prop}$, if $w \peq v$ and $w\in \val p^\Model$, then $v\in \val p^\Model$.
\end{definition}

When clear from context, we may drop superscripts and write e.g.~$\peq$ instead of $\peq^\Frame$.
Note we stray from the familiar definition of $\csf$-models in that we require no interaction between $\peq$ and $\rel$.
Instead, we will make sure that our models validate $\csf$ by modifying the semantics of $\Box$.
Below, if $R\subseteq A\times B$ and $S\subseteq B\times C$, then we define $R;S \subseteq A\times C$ by $a\mathrel{(R;S)}c$ iff there is $b\in B$ such that $a\mathrel Rb\mathrel S c $; and, if $B=A$, then $R^+$ denotes the transitive closure of $R$.

If $R\subseteq X\times X$, we define $d_R \colon\mathcal P(X) \to \mathcal P(X)$ by $d_R(A) = R^{-1} A$, and $e_R(A) = X\setminus d_R(X\setminus A)$.

Fix a $\ckf$-frame $\Frame$ and define ${\brel^\Frame}:=  {(\peq^\Frame;\rel^\Frame)^+}$.
We associate with $\Frame$ a $\ckf$-space $\ms{\Frame } := \tuple{\domai \Frame,d_\peq,d_\rel,e_\brel} $.
We tacitly identify $\Frame$ with this $\csf$-space, so that for example $\Frame\models\varphi$ should be understood as $\ms{\Frame } \models\varphi $.
If $\Model$ is such that its underlying space is of the form $\ms{\Frame}$, we say that $\Model$ is a {\em birelational $\csf$-model.}

\begin{lemma}
    If $\Model$ is a birelational $\ckf$-model, then:
    \begin{enumerate}
        \item $\mathcal M ,w\models \varphi\to\psi$ iff, for all $v\in W$, if $w\peq v$ and $\mathcal M ,v\models\varphi$, then $\mathcal M ,v\models\psi$,
        \item $\mathcal M ,w\models \Box\varphi$ iff, for all $v\bler w$,  $\mathcal M ,v\models\varphi$, and
        \item $\mathcal M ,w\models \Diamond\varphi$ iff, for all  $v\seq w$,  there is $u \ler v$ such that $\mathcal M ,u\models\varphi$.
    \end{enumerate}
\end{lemma}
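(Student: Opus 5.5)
The plan is to unfold the general polyderivative semantics of Section~\ref{sec::general-semantics} for the space $\ms{\Frame} = \tuple{\domai\Frame, d_\peq, d_\rel, e_\brel}$ and compute the relevant operators explicitly. Here the intuitionistic interior operator $\inti$ is $e_\peq$, the derivative $\clod$ is $d_\rel$, and $\intb$ is $e_\brel$. From the definition $d_R(A) = R^{-1}A$ we have $x \in d_R(A)$ iff there is $y \in A$ with $x \mathrel R y$. Dually, $x \in e_R(A)$ iff every $y$ with $x \mathrel R y$ lies in $A$. I will read $\rel$ as pointing ``forwards'' and $v \ler u$ as $u \rel v$, matching the statement.

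Before computing the clauses, I will check that $\ms\Frame$ really is a polyderivative $\ckf$-space, so that the semantics is defined. First, $d_\peq$ is a closure operator because $\peq$ is reflexive and transitive. Second, $d_\rel$ and $e_\brel$ satisfy the $T_d$ axioms because $\rel$ and $\brel$ are transitive and $R^{-1}$ preserves unions and $\varnothing$. Third, the two compatibility equations $e_\brel e_\peq = e_\peq e_\brel e_\peq$ and $e_\brel e_\peq = e_\rel e_\brel e_\peq$ hold. The inclusion ``$\supseteq$'' in each is immediate: reflexivity of $\peq$ gives the first, and for the second it suffices that $\brel$ contains $\rel$ (via $w \peq w \rel v$). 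For ``$\subseteq$'', the relations $\peq;\brel$ and $\rel;\brel$ are both contained in $\brel \cup {\peq}$-paths, which are absorbed using $\brel = (\peq;\rel)^+$ and the reflexivity and transitivity of $\peq$. This is where the choice of $\brel$ is designed to work, and I expect it to be the only mildly delicate point.

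With this in hand, the three clauses follow in turn. For implication, $\val{\varphi\to\psi} = e_\peq((W\setminus\val\varphi)\cup\val\psi)$, which says precisely that every $v \seq w$ either fails $\varphi$ or satisfies $\psi$. For $\Box$, $\val{\Box\varphi} = e_\brel\val\varphi$, so $w \models \Box\varphi$ iff every $v$ with $w \brel v$ satisfies $\varphi$, that is, every $v \bler w$. For $\Diamond$, $\val{\Diamond\varphi} = e_\peq d_\rel \val\varphi$, so $w \models \Diamond\varphi$ iff for every $v \seq w$ we have $v \in d_\rel\val\varphi$. This in turn means there is $u$ with $v \rel u$, i.e.\ $u \ler v$, and $u \models \varphi$.

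Each clause is a one-line unfolding, so the main obstacle is purely bookkeeping: keeping straight the direction of $R^{-1}$ and the notation $\bler$, $\ler$. If the frame-condition verification in the second paragraph turned out not to be needed for the lemma as stated, I would drop it and present only the three computations.
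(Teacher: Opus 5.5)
Your three unfoldings are correct and are exactly what the paper intends. The paper states this lemma without proof, as an immediate consequence of reading $\inti$ as $e_\peq$, $\clod$ as $d_\rel$ and $\intb$ as $e_\brel$. The $d_\peq$ written in the first slot of $\ms{\Frame}$ is a slip, and your reading fixes it correctly.

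The side verification in your second paragraph, however, contains a false claim, so drop it as you already suggested. With $e_\rel$ in place of the paper's $i_\Diamond$, the equation $e_\brel e_\peq = e_\rel e_\brel e_\peq$ fails in general. For instance, take $W=\{a,b\}$, $\peq$ the identity, ${\rel}=\{(a,b)\}$ (so ${\brel}={\rel}$) and $A=\{a\}$. Then $e_\brel e_\peq A=\{b\}$, whereas $e_\rel e_\brel e_\peq A=W$.

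Your justification of ``$\supseteq$'' via ${\rel}\subseteq{\brel}$ actually supports the opposite inclusion $e_\brel e_\peq A\subseteq e_\rel e_\brel e_\peq A$, together with transitivity of $\brel$. The inclusion ``$\supseteq$'' would instead need something like reflexivity of $\rel$, which is what makes $e_\rel$ deflationary.

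The sensible reading of the paper's condition takes $i_\Diamond$ to be the interior operator of $\taud$. It then only asks that $e_\brel e_\peq A$ be $\taud$-open, which is precisely that valid inclusion. In any case none of this affects the lemma: its three clauses are computed from the operators alone, independently of the frame conditions.
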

 
Frames for the other logics we consider are obtained by enforcing various `confluence' properties.

\begin{definition}
    \label{def::birelational-properties}
    We say that a $\ckf$-frame $\Frame$ is:
    \begin{itemize}
        \item \emph{forward confluent} iff $w\rel v$ and $w\peq w'$ implies there is $v'$ such that $v\peq w' \rel v'$,
        \item \emph{backward confluent} iff $w \rel v \peq v'$ implies there is $w'$ such that $w \peq w' \rel v'$,
        \item \emph{downward confluent} iff $w \peq  v \rel v'$ implies there is $w'$ such that $w \rel w' \peq v'$,
        \item \emph{locally linear} iff $w\peq v$ and $w\peq u$ implies that either $v\peq u$ or $u\peq v$.
    \end{itemize}
    Forward, backward, and downward confluence are illustrated in Figure \ref{figure::confluences}.
\end{definition}

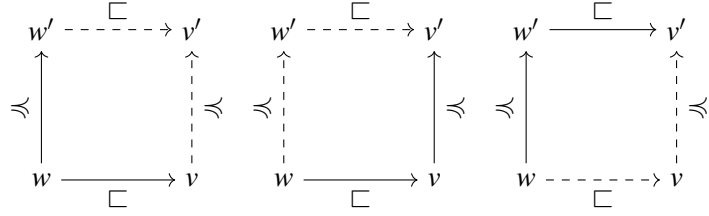
\begin{figure}[ht]
\centering
\tikzstyle{world}=[circle,draw,minimum size=5mm,inner sep=0pt]
\begin{tikzpicture}
    \node (w) at (0,-2) {$w$};
    \node (w2) at (0,0) {$w'$};
    \node (v) at (2,-2) {$v$};
    \node (v2) at (2,0) {$v'$};

    \draw[->] (w) -- (w2) node[midway,left] {$\peq$};
    \draw[->] (w) -- (v) node[midway,below] {$\rel$};
    \draw[dashed,->] (v) -- (v2) node[midway,right] {$\peq$};
    \draw[dashed,->] (w2) -- (v2) node[midway,above] {$\rel$};
\end{tikzpicture}
\begin{tikzpicture}
    \node (w) at (0,-2) {$w$};
    \node (w2) at (0,0) {$w'$};
    \node (v) at (2,-2) {$v$};
    \node (v2) at (2,0) {$v'$};

    \draw[dashed,->] (w) -- (w2) node[midway,left] {$\peq$};
    \draw[->] (w) -- (v) node[midway,below] {$\rel$};
    \draw[->] (v) -- (v2) node[midway,right] {$\peq$};
    \draw[dashed,->] (w2) -- (v2) node[midway,above] {$\rel$};
\end{tikzpicture}
\begin{tikzpicture}
    \node (w) at (0,-2) {$w$};
    \node (w2) at (0,0) {$w'$};
    \node (v) at (2,-2) {$v$};
    \node (v2) at (2,0) {$v'$};

    \draw[->] (w) -- (w2) node[midway,left] {$\peq$};
    \draw[dashed,->] (w) -- (v) node[midway,below] {$\rel$};
    \draw[dashed,->] (v) -- (v2) node[midway,right] {$\peq$};
    \draw[->] (w2) -- (v2) node[midway,above] {$\rel$};
\end{tikzpicture}
\caption{Schematics for forward (left), backward (center), and downward (right) confluence. Solid arrows correspond to the universal quantifiers and dashed arrows correspond to the existential quantifiers.}
\label{figure::confluences}
\end{figure}

Semantics for $\csf$ is typically based on backward confluent frames. 
However, we do not make such a requirement, and instead modify the semantics for $\Box$ to use the relation ${\brel} = {(\peq;\rel)^+}$ instead of $\peq;\rel$.
The main reason for using a different approach is that it will generalize nicely to topological semantics, but for relational structures the difference is inessential due to the two relations being equal on such frames.
On downward confluent frames, we instead have ${\brel}={\rel; \peq}$

\begin{lemma}\label{lemmTC}
    Let $\Frame$ be a $\ckf$-frame.
    \begin{enumerate}
        \item\label{itTC1} If $\Frame$ is backward confluent, then ${\brel^\Frame} = {\peq^\Frame; \rel^\Frame}$.
        \item\label{itTC2} If $\Frame$ is downward confluent, then ${\brel^\Frame} =  {\rel^\Frame; \peq^\Frame}$.
    \end{enumerate}
\end{lemma}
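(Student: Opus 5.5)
Item~\ref{itTC1} I will prove by showing that, under backward confluence, the relation ${\peq;\rel}$ is already transitive. The inclusion ${\peq;\rel}\subseteq{(\peq;\rel)^+}={\brel}$ holds for any relation. For the converse it suffices to show that ${\peq;\rel}$ is transitive, since then it contains its own transitive closure. So suppose $w\peq a\rel b$ and $b\peq c\rel d$. The pattern $a\rel b\peq c$ is exactly the premise of backward confluence, which gives $a'$ with $a\peq a'\rel c$. Then $a'\rel c\rel d$, so $a'\rel d$ by transitivity of $\rel$. Also $w\peq a\peq a'$, so $w\peq a'$ by transitivity of $\peq$. Hence $w\mathrel{(\peq;\rel)}d$, as required.

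For item~\ref{itTC2} I will first prove ${\brel}\subseteq{\rel;\peq}$ by the same strategy: show that ${\peq;\rel}\subseteq{\rel;\peq}$, and that ${\rel;\peq}$ is transitive, so it contains the transitive closure. The first inclusion is downward confluence read directly: $w\peq v\rel v'$ yields $w'$ with $w\rel w'\peq v'$. For transitivity, take $w\rel a\peq b$ and $b\rel c\peq d$. Downward confluence applied to $a\peq b\rel c$ gives $a'$ with $a\rel a'\peq c$. Then $w\rel a\rel a'$ gives $w\rel a'$, and $a'\peq c\peq d$ gives $a'\peq d$, so $w\mathrel{(\rel;\peq)}d$.

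The main obstacle is the reverse inclusion ${\rel;\peq}\subseteq{\brel}$. The natural first attempt is to factor $w\rel v\peq v'$ as $w\peq w\rel v$ followed by an extra $\peq;\rel$ step ending at $v'$. But downward confluence only lets one push $\peq$ \emph{down} past $\rel$, never up, so no such step is forced. In fact I expect this inclusion to fail as literally stated. Take $W=\{w,v,v'\}$, with $\rel=\{(w,v)\}$ and $\peq$ the reflexive closure of $\{(v,v')\}$. This frame is downward confluent: the only instance of $x\peq y\rel z$ is $w\peq w\rel v$, and it is witnessed by $w\rel v\peq v$. Here ${\brel}=\{(w,v)\}$, while ${\rel;\peq}=\{(w,v),(w,v')\}$.

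So for item~\ref{itTC2} I would prove the inclusion above, present this counterexample, and record that equality does hold once $\brel$ is taken $\peq$-upward closed on the right, i.e.\ ${\brel;\peq}={\rel;\peq}$. That corrected identity follows from what was shown. First, ${\brel;\peq}\subseteq{\rel;\peq;\peq}={\rel;\peq}$. Second, ${\rel;\peq}\subseteq{(\peq;\rel);\peq}\subseteq{\brel;\peq}$, using reflexivity of $\peq$ at $w$.
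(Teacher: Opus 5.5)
Your proof of item~1 is the paper's own argument: transitivity of ${\peq;\rel}$ follows by applying backward confluence to the middle segment $a\rel b\peq c$.

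For item~2, the paper only says the proof is ``similar, using downward confluence in place of backward confluence''. You have correctly seen that this cannot work. The analogous argument gives ${\peq;\rel}\subseteq{\rel;\peq}$ and that ${\rel;\peq}$ is transitive, which yields only ${\brel}\subseteq{\rel;\peq}$. Your counterexample to the converse checks out:
\begin{itemize}
\item ${\rel}=\{(w,v)\}$ is vacuously transitive and $\peq$ is a preorder;
\item the only instance of the downward-confluence premise is $w\peq w\rel v$, witnessed by $w\rel v\peq v$;
\item ${\brel}=\{(w,v)\}\neq\{(w,v),(w,v')\}={\rel;\peq}$.
\end{itemize}
The frame is moreover forward confluent (with $v$ itself as witness) and locally linear. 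So item~2 fails as stated even for $\kfi$-frames, and your proposal is more accurate than the paper here. The equality does hold when $\rel$ is reflexive (the $\csf$ case). There, $w\rel a\peq b$ gives $w\peq w\rel a\peq b\rel b$, so ${\rel;\peq}\subseteq(\peq;\rel)^2\subseteq{\brel}$.

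Your repaired identity ${\brel;\peq}={\rel;\peq}$ is exactly what the paper needs later. Identify the operators with their relational counterparts, as the paper does in the proof of Proposition~\ref{prop::classical-diamonds}. Then $w\in\intb\inti(X)$ iff every $(\brel;\peq)$-successor of $w$ lies in $X$, and $w\in\intd\inti(X)$ iff every $(\rel;\peq)$-successor does. So the $\Box$-regularity of downward confluent frames follows from your corrected version, not from the lemma as stated. For the same reason, $\Box$ can be evaluated along $\rel$ on such frames. Truth sets are $\peq$-upsets, so only ${\brel;\peq}$ matters for the truth of $\Box\varphi$, and by your identity this coincides with ${\rel;\peq}$.
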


\begin{proof}
    Omitting superscripts, for the first item, it suffices to check that $\peq ; \rel $ is already transitive.
    Suppose that $w\mathrel{(\peq ; \rel )^2} v$, so that there are $w_1,u_1,u_2$ with $w  \peq w_1\rel u_1\peq u_2 \rel v$.
    By backward confluence, there is $w_2$ such that $w_1\peq w_2 \rel u_2$.
    By transitivity of both relations, $w\peq u_2\rel v$, hence $w \mathrel{(\peq;\rel)}v$.
    
    The second item is similar, using downward confluence in place of backward confluence.
\end{proof}

\begin{definition}
    Let $\Frame = \tuple{\domai \Frame ,\peq^\Frame, \rel^\Frame}$ be a birelational $\ckf$-frame.
    Then, $\Frame$ is a(n):
    \begin{itemize}
        \item \emph{$\ikf$-frame} iff it is forward and backward confluent,\footnote{Despite our modified semantics for $\Box$, backward confluence is necessary here. On the model $w \sqsubset v\peq u$, with $p$ holding only on $u$ and $q$ holding nowhere; we have $w\models\Diamond p \to \Box q$ but $w\not\models\Box(p\to q)$.}
        \item \emph{$\kfi$-frame} iff it is forward and downward confluent,
        \item \emph{$\gkf$-frame} iff it is locally linear,
        \item \emph{$\gkfc$-frame} iff it is a downward confluent $\gsf$-frame.
    \end{itemize}
    We say $\Frame$ is a $\csf$-frame if $\rel^\Frame$ is transitive.
    The definition of $\isf$-, $\sfi$-, $\gsf$-, and $\gsfc$-spaces is analogous, instead assuming that $\Frame$ is a birelational $\csf$-frame. 
    
    A $\csf$-model inherits the confluence properties of its underlying frame, \emph{e.g.,}~a model $\mathcal M $ is forward confluent iff the frame $\tuple{\domai \Model, \peq^\Model, \rel^\Model }$ is.
    In particular, if $\Lambda$ is any of the logics described above, we say that $\mathcal M$ is a {\em $\Lambda$-model} iff  $\tuple{\domai \Model, \peq^\Model, \rel^\Model }$ is a $\Lambda$-frame.
    If $\varphi$ is a formula, we write $\Lambda\models\varphi$ if $\mathcal M,w\models\varphi$ for every $\Lambda$-model $\mathcal M$ and $w\in \domai \Model$.
\end{definition}

\begin{theorem}
    \label{thm::completeness-all}
    Let $\Lambda \in \{\ckf,\csf,\ikf,\kfi,\gkf,\gkfc, \isf,\sfi,\gsf,\gsfc\}$. 
    Then $\Lambda$ is sound and strongly complete with respect to the class of birelational $\Lambda$-models.
\end{theorem}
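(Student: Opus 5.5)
Soundness is almost free from what precedes. I will show that each birelational $\Lambda$-frame $\Frame$, through its associated space $\ms\Frame = \tuple{\domai\Frame, d_\peq, d_\rel, e_\brel}$, is a polyderivative $\Lambda$-space. Lemma~\ref{lem::soundness} then gives soundness.

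For the base case I check the polyderivative $\ckf$-frame conditions:
\begin{itemize}
\item $d_\peq$ is a closure operator because $\peq$ is a preorder.
\item $d_\rel$ is a derivative operator because $\rel$ is transitive.
\item $e_\brel$ is an integral operator because $\brel$ is transitive.
\item The two interaction equations $e_\Box\inti = \inti e_\Box\inti$ and $e_\Box\inti = e_\Diamond e_\Box\inti$ reduce to inclusions ${\peq;\brel}\subseteq{\brel}$ and ${\rel;\brel}\subseteq{\brel}$, on $\peq$-upsets. These hold because $\brel=(\peq;\rel)^+$ absorbs a $\peq$-prefix, and because a leading $\rel$-step can be padded with reflexivity of $\peq$.
\end{itemize}
For the stronger logics I translate each regularity property into a confluence condition:
\begin{itemize}
\item Backward confluence gives $\Diamond$-regularity, i.e.\ the $\rel$-preimage of an upset is an upset.
\item Forward confluence together with Lemma~\ref{lemmTC}(\ref{itTC1}) gives $\Diamond$-coarseness.
\item Downward confluence together with Lemma~\ref{lemmTC}(\ref{itTC2}) gives $\Box$-regularity, since on upsets $e_{\rel;\peq}=e_\rel$.
\item Local linearity gives hereditary extremal disconnectedness of the Alexandroff topology; this is the standard fact from Bezhanishvili \emph{et al.}
\end{itemize}
Where a frame class as literally defined lacks a property needed here (e.g.\ $\kfi$ needs $\Diamond$-regularity), I would instead verify the relevant axiom directly on the frames with the relational clauses of the preceding lemma. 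The $\csf$ variants add reflexivity of $\rel$, which makes $d_\rel$ and $e_\brel$ closure and interior operators.

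Completeness goes through a canonical model. Its worlds are the prime $\Lambda$-theories, i.e.\ consistent, deductively closed, with the disjunction property. Set $\Gamma\peq\Delta$ iff $\Gamma\subseteq\Delta$, and $\Gamma\rel\Delta$ iff $\Box^{-1}\Gamma\subseteq\Delta$ and $\Diamond\Delta\subseteq\Gamma$. Then $\peq$ is a preorder, and $\rel$ is transitive by $\ax 4_\Box$ and $\ax 4_\Diamond$ (reflexive in the $\csf$ cases by $\ax T_\Box,\ax T_\Diamond$). Strong completeness reduces to the truth lemma, because any $\Gamma\nvdash_\Lambda\varphi$ extends to a prime theory omitting $\varphi$ by the usual Lindenbaum argument.

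The propositional and $\to$ cases of the truth lemma are standard. For $\Box$ one must show $\Box\varphi\in\Gamma$ iff $\varphi\in\Delta$ for all $\Delta\bler\Gamma$. The direction from left to right follows because membership of $\Box$-formulas propagates along both $\peq$ and $\rel$, and hence along $\brel$. For the converse, if $\Box\varphi\notin\Gamma$, a prime-extension argument using $\ax K_\Box$ and \ax{N} yields a $\rel$-successor of $\Gamma$ itself omitting $\varphi$. The $\Diamond$ clause is $\forall\,{\seq}\,\exists\,{\ler}$. Given $\Diamond\varphi\in\Gamma'\supseteq\Gamma$, I must build a prime $\Delta\ni\varphi$ with $\Box^{-1}\Gamma'\subseteq\Delta$ and $\Diamond\Delta\subseteq\Gamma'$. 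I will do this by extending $\{\varphi\}\cup\Box^{-1}\Gamma'$ while avoiding $\{\psi:\Diamond\psi\notin\Gamma'\}$, using $\ax K_\Diamond$.

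Here primality of $\Delta$ is the main obstacle. Without \ax{DP}, the set of formulas to avoid is not closed under disjunction, so in $\ckf$/$\csf$ I would take worlds to be pairs or use a segment-style construction in the spirit of the constructive-logic literature (as in \cite{ArisakaDS15}, \cite{balbiani2024variants}). For $\Diamond$-regular logics \ax{DP} makes the avoided set a prime filter complement, and the extension goes through. Finally I verify that the canonical frame has the required confluences:
\begin{itemize}
\item forward and backward confluence from \ax{FS} and \ax{DP}, as in \cite{servi1984axiomatizations};
\item downward confluence from \ax{RV};
\item local linearity from \ax{GD}, since $(\varphi\to\psi)\lor(\psi\to\varphi)$ in a prime theory forces its extensions to be linearly ordered.
\end{itemize}
The delicate confluence constructions, especially downward confluence for $\kfi$ and $\sfi$, I expect to adapt from \cite{balbiani2024variants}.
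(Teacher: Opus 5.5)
Your overall plan matches the paper's. Soundness comes from viewing $\ms\Frame$ as a polyderivative space and applying Lemma~\ref{lem::soundness}. Completeness uses the canonical model with $\Gamma\rel\Delta$ iff $\Box^{-1}\Gamma\subseteq\Delta$ and $\Diamond\Delta\subseteq\Gamma$, with augmented (pair) theories for $\ckf$/$\csf$. However, the $\Box$ case of your truth lemma fails as stated. A $\rel$-successor of $\Gamma$ itself omitting $\varphi$ must extend $\Box^{-1}\Gamma$ while avoiding $\{\varphi\}\cup\{\psi:\Diamond\psi\notin\Gamma\}$. The obstruction is exactly $\Box(\varphi\vee\psi)\in\Gamma$ with $\Box\varphi\notin\Gamma$ and $\Diamond\psi\notin\Gamma$. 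Excluding it is the axiom $\ax{RV}$, which $\ax K_\Box$ and $\ax N$ do not give. In $\ikf$ the obstruction really occurs. Take the $\ikf$-frame with $w\peq w'$, $v\peq v''$, $w\rel v$, $w'\rel v'$, $w'\rel v''$, and set $\val p=\{v,v''\}$, $\val q=\{v'\}$. Then $w$ satisfies $\Box(p\vee q)$ but neither $\Box p$ nor $\Diamond q$. So for its theory $\Gamma$, every canonical $\rel$-successor contains $p\vee q$ but not $q$, hence contains $p$, although $\Box p\notin\Gamma$.

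The missing idea for $\ikf,\gkf,\isf,\gsf$ is to pass through a $\peq$-extension using $\ax{FS}$; this is exactly why $\Box$ is read along $\brel$.
\begin{enumerate}
\item Take a prime $\Delta\supseteq\Box^{-1}\Gamma$ with $\varphi\notin\Delta$, imposing no $\Diamond$-constraint.
\item Extend $\Gamma\cup\{\Diamond\delta:\delta\in\Delta\}$ to a prime $\Gamma'$ avoiding $\{\Box\chi:\chi\notin\Delta\}$. A failure yields $\Gamma\vdash\Diamond\delta\to\Box\chi$ with $\delta\in\Delta$ and $\chi\notin\Delta$, after combining finitely many formulas via $\Diamond(p\wedge q)\to\Diamond p$, $\Box p\vee\Box q\to\Box(p\vee q)$, and primeness of $\Delta$.
\item Then $\ax{FS}$ gives $\Box(\delta\to\chi)\in\Gamma$, so $\chi\in\Delta$, a contradiction.
\end{enumerate}
Hence $\Gamma\peq\Gamma'\rel\Delta$, so $\Gamma\brel\Delta$; this is the backward-confluence argument of Lemma~\ref{lem::canonical-model-confluences}. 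Your direct construction works only for the $\ax{RV}$-logics, and there it uses $\ax{RV}$ together with $\ax{DP}$. This is why the paper's truth lemma is stated for $\Diamond$-regular logics containing $\ax{FS}$ or $\ax{CD}$ (i.e.\ $\ax{RV}$).

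In the soundness part you have also swapped the two confluences. Forward confluence is what makes $\rel^{-1}U$ a $\peq$-upset whenever $U$ is one, i.e.\ $\Diamond$-regularity. Backward confluence gives ${\rel;\peq}\subseteq{\peq;\rel}$ and hence $\Diamond$-coarseness; Lemma~\ref{lemmTC}(\ref{itTC1}) itself presupposes backward confluence. For instance, the frame $a\peq b$, $a\rel c$ is backward confluent, but $\rel^{-1}\{c\}=\{a\}$ is not an upset. The frame $w\rel v\peq u$ is forward confluent but refutes $\ax{FS}$. For $\ikf$-frames both confluences are assumed, so nothing breaks. With the correct pairing, $\kfi$-frames are $\Diamond$-regular outright, so your fallback is unnecessary there. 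Conversely, no direct verification can rescue the literally stated $\gkf$-class. Merely locally linear frames refute $\ax{DP}$: take $a\peq b$, $a\rel c_1$, $b\rel c_2$, with $p$ true only at $c_1$ and $q$ only at $c_2$. So those definitions must be read as locally linear $\ikf$-frames, and as downward confluent such frames for $\gkfc$.
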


These results have either appeared in the literature or are minor variations~\cite{AlechinaMPR01,balbiani2024variants,servi1984axiomatizations,Simpson94}, but we provide a detailed proof sketch in the technical appendix.

\section{Topological Completeness}
\label{sec::topological-completeness}

Relational semantics for $\SF$ (and, by extension, to intuitionistic logic) may be seen as a special case of topological semantics, but this is no longer the case for $\KF$, where in general the Cantor derivative induced by a transitive frame only coincides with its relational semantics when the frame is irreflexive~\cite{beg}.
Topological completeness can still be deduced from birelational completeness, but some post-processing is needed.
In this section, we introduce polytopological models and show how the birelational completeness results can be lifted to this setting.

\begin{definition}
    \label{def::bitopological-space}
    A  {\em tritopological space} is a tuple ${\mathcal X} = \tuple{\domai {\mathcal X},\taui^{\mathcal X},\taud^{\mathcal X},\taub^{\mathcal X}}$, where:
    \begin{itemize}
        \item $\domai {\mathcal X}$ is a non-empty set, and
        \item $\taui^{\mathcal X}$, $\taud^{\mathcal X}$, and $\taub^{\mathcal X}$ are topologies over $\domai {\mathcal X}$.
    \end{itemize}    
    A {\em bitopological space} is defined similarly, but omitting $\taub^{\mathcal X}$.
    
    For a connective $\circ\in\{i,\Diamond,\Box\}$, we define $i^{\mathcal X}_\circ$ and $c^{\mathcal X}_\circ$ to be the respective interior and closure operators with respect to $\tau^{\mathcal X}_\circ$.
    Similarly, we let $d_\circ$ and $e_\circ$ denote the Cantor derivative and integral.   

    Every tritopological space ${\mathcal X}=\tuple{\domai {\mathcal X},\taui^{\mathcal X},\taud^{\mathcal X}, \taub^{\mathcal X}}$ gives rise to two {\em induced} polyderivative spaces: namely, $\mc{{\mathcal X}} := \tuple{\domai {\mathcal X} ,\inti^{\mathcal X} ,i_\Diamond^{\mathcal X} ,c_\Box^{\mathcal X}  }$ and $\md{{\mathcal X}} :=\tuple{\domai {\mathcal X} ,\inti^{\mathcal X} ,d_\Diamond^{\mathcal X} ,e_\Box^{\mathcal X}  }$.
    Spaces of the first form are {\em tritopological closure spaces} and of the second, {\em tritopological   derivative spaces}.

    A {\em tritopological closure/derivative model} is a tuple $\mathcal M = \tuple{\domai {\Model},\inti^\Model,d_\Diamond^\Model,e_\Box^ \Model ,\val\cdot^\Model}$, consisting of a polyderivative space of the respective form equipped with a valuation $\val\cdot^\Model\colon\mathrm{Prop} \to \taui^\Model$.
    Again, we drop the superscripts whenever possible.
    
    For $\Lambda\in \{\csf,\isf,\sfi, \gsf,\gsfc\}$, a {\em tritopological $\Lambda$-space} is a tritopological closure space which is also a $\Lambda$-space.
    Similarly, for $\Lambda\in \{\ckf,\ikf,\kfi, \gkf,\gkfc\}$, a {\em tritopological $\Lambda$-space} is a is a tritopological derivative space which is also a $\Lambda$-space.
\end{definition}

Note that, if $\mathcal X$ is a $\Lambda$-space for any of the $\KF$ logics $\Lambda$ we consider, then $\taud^{\mathcal X}$ and $\taub^{\mathcal X}$ are $T_d$, as they are assumed to validate the $\ax{4}_\Box$ and $\ax{4}_\Diamond$ axioms.

Often, we want to work with bitopological rather than tritopological spaces, as these are a bit simpler.
Bitopological spaces also induce polyderivative spaces, by first letting them induce a tritopological space.
To this end, note that if $X$ is any set and $\tau,\sigma$ are topologies on  $X$, then $\tau\cap\sigma$ is also a topology on $X$ and is clearly their meet, i.e.~the greatest lower bound of $\{\tau,\sigma\}$ under inclusion.
Every bitopological space ${\mathcal X}=\tuple{\domai {\mathcal X},\taui^{\mathcal X},\taud^{\mathcal X} }$ thus induces a tri-topological space by adding $\taub^{\mathcal X} := \taui^{\mathcal X}\cap \taud^{\mathcal X}$. 
We then say that the {\em induced} spaces of the bitopological space $\mathcal X$ are the induced spaces of its induced tritopological space.
Induced spaces are {\em bitopological closure spaces} and {\em bitopological derivative spaces,} respectively.

Note, however, that in the general case, the third topology need not necessarily be the meet of the first two, and indeed Cantor derivative logics are not complete for spaces of this form.

\begin{lemma}\label{lemmInducedIncomplete}
If $\mathcal X$ is a  derivative space induced from a tritopological space $\tuple{X,\taui,\taud,\taub}$ where $\taub\subseteq \taui $, then
\[\mathcal X \models \Box q\to p\vee (p\to q).\]
This formula is not derivable in $\gkfc$ (or any other $\KF$ logic we consider).
\end{lemma}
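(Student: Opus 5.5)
The plan has two independent parts. First I show validity by a direct pointwise computation with the Cantor integral. Then I show non-derivability by a two-world countermodel, using the soundness half of Theorem~\ref{thm::completeness-all}.

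\emph{Validity.} Fix a valuation and write $P=\val p$, $Q=\val q$. Since validity means the implication evaluates to $X$, it suffices to prove $\val{\Box q}\subseteq P\cup \val{p\to q}$. Indeed, if this inclusion holds, then $(X\setminus\val{\Box q})\cup P\cup\val{p\to q}=X$, and $\inti X=X$.

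So let $x\in \val{\Box q}=e_\Box Q$, and suppose $x\notin P$. By definition of the Cantor integral of $\taub$, there is a $\taub$-open set $U\ni x$ with $U\setminus\{x\}\subseteq Q$.

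I claim $U\subseteq (X\setminus P)\cup Q$. The point $x$ lies in $X\setminus P$ by assumption. Every other point of $U$ lies in $Q$.

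Since $\taub\subseteq\taui$, the set $U$ is also $\taui$-open. Hence $x\in \inti\big((X\setminus P)\cup Q\big)=\val{p\to q}$. This proves validity.

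\emph{Non-derivability.} Each $\KF$ logic considered ($\ckf,\ikf,\kfi,\gkf$) is contained in $\gkfc$. This is because $\gkfc$ contains $\ax{FS},\ax{DP},\ax{RV},\ax{GD}$. So it suffices to refute the formula on a birelational $\gkfc$-model, using the soundness direction of Theorem~\ref{thm::completeness-all}.

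Take the frame with two worlds $w\peq v$ (plus reflexivity) and with $\rel$ empty. The empty relation is transitive. Forward, backward and downward confluence hold vacuously, since each has a $\rel$-premise. Local linearity holds because $\peq$ is a chain. So this is a $\gkfc$-frame.

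Put $\val p=\{v\}$, which is upward closed, and $\val q=\varnothing$. Then ${\brel}=(\peq;\rel)^+$ is empty, so $w\models\Box q$ vacuously. On the other hand, $w\not\models p$. Also $w\not\models p\to q$, since $w\peq v$, $v\models p$ and $v\not\models q$. Thus the formula fails at $w$, so it is not derivable in $\gkfc$, nor in any weaker logic.

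I expect no real obstacle. The only point needing care is that the argument uses the Cantor-derivative semantics of $\Box$, where the point $x$ itself is excluded from the neighbourhood condition, together with the hypothesis $\taub\subseteq\taui$.
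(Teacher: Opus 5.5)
Your proof is correct and follows the paper's argument. The validity half is the same neighbourhood argument: a $\taub$-neighbourhood $U$ of $x$ with $U\setminus\{x\}\subseteq\val q$ is also a $\taui$-neighbourhood, and it witnesses $p\to q$ when $x\notin\val p$. Non-derivability is likewise shown by a birelational $\gkfc$ countermodel; your two-world model with empty $\rel$ is simpler than the paper's three-point model ($0\peq 1$, $0,1\rel 2$), since it makes every confluence condition vacuous, and you usefully make explicit that the other $\KF$ logics are contained in $\gkfc$.
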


\begin{proof}
    Let $\val\cdot$ be a valuation on $\mathcal X$, $x\in X$, and suppose that $x\in \val{\Box q} \setminus \val p$.
    Then, $x$ has a $\taub$-neighbourhood $U$ such that $U\setminus \{x\} \subseteq \val{q} $.
    By assumption, $U$ is also a $\taui$-neighbourhood of $x$, and since $x\notin\val p$, $U \cap \val { p} \subseteq U\setminus \{x\} \subseteq \val q$, hence $U$ witnesses that $x\in\val{p\to q}$.
    Since $x$ was arbitrary, $\val {\Box q\to p\vee (p\to q)}  = X$.
    
    For a counterexample, let $X = \{0,1,2\} $ with $0\peq 1$  and $0,1\rel 2$ (with $\peq$ being reflexive), and let $\val p=\{1\}$ and $\val q =\{2\}$.
    Then, it is not hard to check that $0\notin \val {\Box q\to p\vee (p\to q)} $.
\end{proof}

Thus we cannot avoid working with tritopological spaces if we expect to prove completeness for our target logics.

Although topological semantics are not, strictly speaking, a generalization of transitive relational semantics, it is possible to assign a topological space to a binary relation in a way that oftentimes does preserve modal semantics.
Given a binary relation $R\subseteq W\times W$, and $w\in W$, we define ${\uparrow_R} w  = \{v \mid w\mathrel R v\}$.
We can define a topology $\tau_R$ on $W$ where $U\in\tau_R$ is open if and only if ${\uparrow_R} U\subseteq U $, where ${\uparrow_R} U = \bigcup_{u\in U}{\uparrow_R} u $.
If $R$ is a preorder, then the sets of the form $\uparrow_R w$ form a basis for $\tau_R$.
Note that $R$ does not need to be a preorder for $\tau_R$ to be a topology, but in this case $\uparrow_R w$ is not always open.
However, we can replace $R$ by its transitive, reflexive closure and obtain the same topology: more general, if $R\subseteq S\subseteq R^*$, then $\tau_R = \tau_S = \tau_{R^*}$.
It is moreover readily checked that $\tau_{R\cup S} = \tau_{R}\cap \tau_S$, and if $R,S$ are both reflexive then $R\cup S \subseteq (R;S)^+\subseteq (R\cup S)^*$, so in this context $\tau_{R}\cap \tau_S = \tau_{(R;S)^+}$.
Comparing this with the definition of $\brel:=(\peq;\rel)^+$, we obtain the following characterization of

\begin{lemma}\label{lemmTrirelToBitop}
    If $\tuple{X,\peq,\rel}$ is a reflexive birelational frame then $\tau_{\brel} = \tau_{\peq}\cap \tau_{\rel} $.
\end{lemma}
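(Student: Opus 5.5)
We derive the lemma from three elementary facts about the operator $R\mapsto\tau_R$ that the paper states just before it: (i) if $R\subseteq S\subseteq R^*$ then $\tau_R=\tau_S=\tau_{R^*}$; (ii) $\tau_{R\cup S}=\tau_R\cap\tau_S$; and (iii) if $R,S$ are reflexive then $R\cup S\subseteq (R;S)^+\subseteq (R\cup S)^*$. Applying (i) with $R\cup S$ in place of $R$ and $(R;S)^+$ in place of $S$, fact (iii) gives $\tau_{(R;S)^+}=\tau_{R\cup S}$. Fact (ii) then yields $\tau_{R\cup S}=\tau_R\cap\tau_S$. Specialising to $R={\peq}$ and $S={\rel}$, and recalling ${\brel}=(\peq;\rel)^+$, gives $\tau_{\brel}=\tau_\peq\cap\tau_\rel$. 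Since the paper only calls these facts ``readily checked'', we verify each briefly.

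For (i), write $\uparrow_R U\subseteq U$ for the openness condition. If $U$ is $R$-upward closed, then by induction on chain length it is closed under finite $R$-chains, hence $R^*$-upward closed. Because $R\subseteq S\subseteq R^*$, closure under $R^*$ implies closure under $S$, and closure under $S$ implies closure under $R$. For (ii), $\uparrow_{R\cup S}U=\uparrow_R U\cup\uparrow_S U$, so $U$ is $(R\cup S)$-upward closed if and only if it is both $R$- and $S$-upward closed.

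For (iii), reflexivity of $S$ gives $R=R;\mathrm{id}\subseteq R;S$, and reflexivity of $R$ gives $S\subseteq R;S$. Hence $R\cup S\subseteq R;S\subseteq (R;S)^+$. Conversely, any pair in $R;S$ is a two-step chain in $R\cup S$, so $R;S\subseteq (R\cup S)^*$. Since $(R\cup S)^*$ is transitive, this inclusion extends to $(R;S)^+\subseteq (R\cup S)^*$.

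The one point needing care is the hypothesis. The frame is called ``reflexive'', and $\peq$ is always a preorder, but in a $\ckf$-frame $\rel$ is only transitive. Reflexivity of $\rel$ is used exactly once, to get $R\subseteq R;S$ in (iii). I therefore read ``reflexive birelational frame'' as requiring $\rel$ to be reflexive, as in $\csf$-frames. Without that assumption the inclusion ${\peq}\subseteq{\brel}$ can fail, and the identity can fail with it.
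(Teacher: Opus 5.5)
Your proposal is correct and follows the paper's own argument. The paper derives the lemma from exactly the three facts you use ($\tau_R=\tau_S=\tau_{R^*}$ whenever $R\subseteq S\subseteq R^*$; $\tau_{R\cup S}=\tau_R\cap\tau_S$; and $R\cup S\subseteq (R;S)^+\subseteq (R\cup S)^*$ for reflexive $R,S$), specialised to $R={\peq}$ and $S={\rel}$. Your verifications of these facts, and your reading of ``reflexive'' as reflexivity of $\rel$ (which the paper also flags as essential), fill in what the paper calls readily checked.
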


Note that reflexivity is essential here, as can be deduced from Lemma~\ref{lemmInducedIncomplete}, which shows that using the meet topology leads to validity of formulas not valid using $\tau_{\brel}$.
Topological spaces of the form $(W,\tau_R)$ are {\em Alexandroff spaces,} and are characterized by the property that arbitrary intersections of open sets are open.
We may use such spaces to obtain several topological completeness results from their Kripke semantics counterparts.

\begin{definition}
    If $\mathcal F =  \tuple{W,   \peq, \rel  }$ is a birelational $\ckf$-frame, its associated tritopological space is $\mt{\Frame} = \tuple{W,   \tau_\peq, \tau_\rel, \tau_\brel}$.
\end{definition}


\begin{proposition}
    \label{prop::topologizing-preserves-truth}
    Let $\Frame = \tuple{W,   \peq, \rel  }$ be a birelational $\ckf$-frame.
    \begin{enumerate}
        \item If $\rel$ is reflexive, then the polyderivative and bitopological closure spaces associated with $\mathcal{F}$ coincide, \emph{i.e.}, $\ms{\Frame} = \mc{(\mt{\Frame})}$.
        \item If $\rel$ is irreflexive, then the polyderivative and tritopological derivative spaces associated with $\mathcal{F}$ coincide, \emph{i.e.}, $\ms{\Frame} = \md{(\mt{\Frame})}$.
    \end{enumerate}
\end{proposition}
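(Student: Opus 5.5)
We compare the three operators of $\ms{\Frame} = \tuple{W,d_\peq,d_\rel,e_\brel}$ with those of the induced space, one coordinate at a time. Both spaces have carrier $W$, so it is enough to show that the intuitionistic, diamond and box operators agree.

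The intuitionistic coordinate is common to both items. In $\ms\Frame$ the first operator is the one induced by the preorder $\peq$. Read as an interior, it sends $A$ to the set of $w$ with ${\uparrow_\peq}w\subseteq A$. In the Alexandroff space $\tau_\peq$, the sets ${\uparrow_\peq}w$ form a basis and each is the least open neighbourhood of $w$. Hence the $\tau_\peq$-interior of $A$ is $\{w: {\uparrow_\peq}w\subseteq A\}$ as well. This is the standard McKinsey--Tarski correspondence and needs only reflexivity and transitivity of $\peq$.

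For item 1, assume $\rel$ is reflexive. Then $\rel$ is a preorder, so ${\uparrow_\rel}w$ is the least $\tau_\rel$-open neighbourhood of $w$. Therefore $w\in c_\Diamond A$ iff ${\uparrow_\rel}w\cap A\neq\varnothing$ iff $w\in \rel^{-1}A = d_\rel A$.

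For the box coordinate we need $\brel$ to be a preorder. It is transitive by definition. It is reflexive because $w\peq w\rel w$ gives $w\brel w$. So ${\uparrow_\brel}w$ is the least $\tau_\brel$-open neighbourhood of $w$. It follows that $i_\Box A=\{w:{\uparrow_\brel}w\subseteq A\}=W\setminus \brel^{-1}(W\setminus A) = e_\brel A$.

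For item 2, assume $\rel$ is irreflexive. Since $\rel$ is also transitive, it is a strict partial order. We claim the Cantor derivative of $\tau_\rel$ is $d_\rel$. The least $\tau_\rel$-open neighbourhood of $w$ is $U_w=\{w\}\cup{\uparrow_\rel}w$, because this set is upward closed by transitivity. By irreflexivity, $w\notin{\uparrow_\rel}w$, so $U_w\setminus\{w\}={\uparrow_\rel}w$. Thus $w$ is a limit point of $A$ iff ${\uparrow_\rel}w\cap A\neq\varnothing$ iff $w\in d_\rel A$, as in the standard argument of \cite{beg}.

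The box coordinate in item 2 is the step I expect to be the main obstacle. The derivative $d_\brel$ from $\Frame$ must agree with the Cantor derivative of $\tau_\brel$. The general recipe for a transitive relation $R$ is the same: the least open neighbourhood of $w$ is $\{w\}\cup{\uparrow_R}w$, and the Cantor derivative equals $R^{-1}$ exactly when $w\notin {\uparrow_R}w$ for every $w$.

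Here, however, $\brel$ contains $\peq;\rel$, and since $\peq$ is reflexive it contains $\rel$. Moreover $\brel$ may fail to be irreflexive even when $\rel$ is, for instance if $w\rel v\peq w$.

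I would first try to verify irreflexivity of $\brel$ directly. If that fails, I would check whether the proposition only needs the weaker fact that the $\tau_\brel$ topology computed via $\tau_{(\brel)\cup\{\text{identity}\}}$ gives matching derivatives on the relevant sets. I would also check whether a condition such as $\peq$ being antisymmetric together with $\rel$ being irreflexive propagates to $\brel$. Failing both, the statement likely needs $\brel$ rather than just $\rel$ to be irreflexive, and I would flag that as the hypothesis actually used.

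Finally, since $e$ is the dual of $d$ in both spaces, agreement of the derivatives gives agreement of the integrals. This completes the identification $\ms\Frame=\md{(\mt\Frame)}$.
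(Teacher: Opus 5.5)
Your treatment of the intuitionistic coordinate, of all of item 1, and of the diamond coordinate in item 2 is correct. It is the standard least-open-neighbourhood computation that the paper's proof defers to: the paper only cites the single-relation case and says the three-relation adaptation is straightforward. For the literal ``bitopological'' wording of item 1, add that here $\tau_\brel=\tau_\peq\cap\tau_\rel$ (Lemma~\ref{lemmTrirelToBitop}), since both relations are reflexive. The real issue is the box coordinate of item 2. You leave it unresolved, yet your final sentence claims the identification $\ms{\Frame}=\md{(\mt{\Frame})}$ is complete. That step cannot be supplied, because item 2 is false as stated; your suspicion is correct, and the paper's appeal to a ``straightforward adaptation'' overlooks exactly this point.

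Here is a concrete counterexample, completing the configuration you sketched. Let $W=\{w,v\}$ with $v\peq w$ (plus reflexivity) and $w\rel v$ only. Then $\peq$ is even a partial order and $\rel$ is transitive and irreflexive. But $v\peq w\rel v$ and $w\peq w\rel v$ give ${\brel}=\{(w,v),(v,v)\}$. The $\tau_\brel$-open sets are therefore $\varnothing$, $\{v\}$ and $W$, so $v$ is $\tau_\brel$-isolated and never lies in a Cantor derivative, whereas $v\in\brel^{-1}\{v\}$. Now take $\val{p}=\{w\}$, which is a $\peq$-upset. Relationally $e_\brel\{w\}=W\setminus\brel^{-1}\{v\}=\varnothing$. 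The Cantor integral of $\tau_\brel$ instead gives $W\setminus\{w\}=\{v\}$, which is not even a $\peq$-upset. So $v\models\Box p$ in $\md{(\mt{\Frame})}$ but $v\not\models\Box p$ in $\ms{\Frame}$.

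This one example disposes of all three fallbacks you list. First, $\brel$ is not irreflexive. Second, the discrepancy occurs on a $\taui$-open set, so it affects truth and not merely the operators. Third, antisymmetry of $\peq$ does not help. The hypothesis actually needed is that $\brel$ itself is irreflexive, which also forces $\rel\subseteq\brel$ to be irreflexive. Under that hypothesis your recipe for a transitive $R$, applied to $R={\brel}$, together with duality, finishes the proof. Be aware that this stronger hypothesis is not delivered by the $\mathbb Z$-unravelling used in Theorem~\ref{thm::completeness-k4}. Since $\peq^{\mathrm{irr}}$ ignores the integer coordinate, one has $\tuple{x,n}\peq^{\mathrm{irr}}\tuple{x,n-1}\rel^{\mathrm{irr}}\tuple{x,n}$ whenever $x\rel x$.
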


\begin{proof}
    This is shown for a single relation in, \emph{e.g.}~\cite{BaltagBF23}.
    It is straightforward to adapt their argument to the non-classical case with three relations.
    In the first item, we can moreover replace tritopological semantics by bitopological semantics using Lemma~\ref{lemmTrirelToBitop}.
\end{proof}

\begin{proposition}
    \label{prop::classical-diamonds}
    Let $\Frame = \tuple{W ,\peq, \rel}$ be a reflexive or irreflexive $\ckf$-frame.
    \begin{enumerate}
        \item\label{itClassDBack} If $\Frame$ is backward confluent, then $\mt{\Frame}$ is $\Diamond$-coarse.
        \item\label{itClassDFor} If $\Frame$ is forward confluent, then $\mt{\Frame}$ is $\Diamond$-regular.
        \item\label{itClassDDwn} If $\Frame$ is downward confluent, then $\mt{\Frame}$ is $\Box$-regular.
        \item\label{itLocallyLinear} If $\peq$ is locally linear, then $\tau_\peq$ is hereditarily extremally disconnected.
    \end{enumerate}
\end{proposition}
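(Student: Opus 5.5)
The plan is to reduce every item to a purely relational computation. By Proposition~\ref{prop::topologizing-preserves-truth}, whether $\rel$ is reflexive or irreflexive, the polyderivative space induced by $\mt{\Frame}$ is $\ms{\Frame}=\tuple{W,d_\peq,d_\rel,e_\brel}$. Its operators are therefore given relationally:
\begin{itemize}
\item $\inti A=\{w : \forall v\seq w,\ v\in A\}$, the largest $\peq$-upset contained in $A$;
\item $\clod A=\rel^{-1}A$;
\item $\intd A=\{w:\forall v\ler w,\ v\in A\}$;
\item $\intb A=\{w:\forall v\bler w,\ v\in A\}$.
\end{itemize}
Each item then becomes an identity between such ``all successors along a composite relation'' sets, to be verified using the relevant confluence property. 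A convenient reformulation: for $\peq$-closed reasoning, $e_R\inti A$ is the set of $w$ all of whose $(R;\peq)$-successors lie in $A$, and $\inti e_R A$ is the set of $w$ all of whose $(\peq;R)$-successors lie in $A$.

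For item~\ref{itClassDBack} we need $\inti\intd=\intb\inti$. The left side says all $(\peq;\rel)$-successors of $w$ lie in $A$; the right side says all $(\brel;\peq)$-successors do. By Lemma~\ref{lemmTC}(\ref{itTC1}), backward confluence gives ${\brel}={\peq;\rel}$, so the right side concerns ${\peq;\rel;\peq}$. Since $\peq$ is reflexive, ${\peq;\rel}\subseteq{\peq;\rel;\peq}$, which gives the inclusion $\intb\inti A\subseteq\inti\intd A$. For the converse, backward confluence applied to $w'\rel v\peq v'$ yields $w'\peq w''\rel v'$, so ${\peq;\rel;\peq}\subseteq{\peq;\rel}$ by transitivity of $\peq$. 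Hence the two relations coincide and the sets are equal.

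For item~\ref{itClassDFor} it suffices to show that $\clod\inti A$ is a $\peq$-upset, since then it equals its own $\inti$-interior. Take $w\rel v$ with $v\in\inti A$, and $w\peq w'$. Forward confluence gives $v'$ with $v\peq v'$ and $w'\rel v'$. Since $\inti A$ is an upset, $v'\in\inti A$, so $w'\in\clod\inti A$.

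For item~\ref{itClassDDwn}, Lemma~\ref{lemmTC}(\ref{itTC2}) gives ${\brel}={\rel;\peq}$. Then $\intb\inti A$ concerns ${\rel;\peq;\peq}={\rel;\peq}$-successors, while $\intd\inti A$ concerns ${\rel;\peq}$-successors, so the two sets coincide.

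For item~\ref{itLocallyLinear}, observe that a subspace $Y$ of an Alexandroff preorder space is the Alexandroff space of the restricted preorder, which is again locally linear. It therefore suffices to show extremal disconnectedness, i.e.\ that the closure of an open set is open. The closure of an upset $U$ is its downset ${\downarrow}U$. Suppose $x\peq u\in U$ and $x\peq y$. Local linearity gives either $y\peq u$, so $y\in{\downarrow}U$, or $u\peq y$, so $y\in U\subseteq{\downarrow}U$. Thus ${\downarrow}U$ is an upset, hence open. Alternatively, this item can simply be cited from Bezhanishvili \emph{et al.}~\cite{bezhanishvili2015locallylinear}.

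The main obstacle I expect is the bookkeeping in the irreflexive case. There one must check that Proposition~\ref{prop::topologizing-preserves-truth} really identifies $e_\Box$ of $\tau_\brel$ with $e_\brel$, which needs $\brel$ to behave irreflexively. The safest route is to state all claims only about $\ms{\Frame}$ and transfer them via that proposition. The converse inclusion in item~\ref{itClassDBack}, where backward confluence must be applied after composing with $\peq$, is the only non-immediate relational step.
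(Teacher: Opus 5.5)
Your strategy is the paper's: you identify the space induced by $\mt{\Frame}$ with $\ms{\Frame}$ via Proposition~\ref{prop::topologizing-preserves-truth} and then check each identity relationally. Item~2 is exactly the paper's argument. In items~1 and~3 you compose relations where the paper chases elements (the paper's version is somewhat garbled, and yours is cleaner). For item~4 you give a direct proof where the paper cites Bezhanishvili \emph{et al.} Two steps need correcting, however.

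\textbf{Item~3.} You cite Lemma~\ref{lemmTC}(\ref{itTC2}), but that equality is false. Take $W=\{w,a,b\}$ with ${\rel}=\{(w,a)\}$ and $\peq$ the reflexive closure of $\{(a,b)\}$. This frame is downward confluent, yet ${\brel}=\{(w,a)\}\neq{\rel;\peq}$. You only need ${\brel;\peq}={\rel;\peq}$, and that does hold. Downward confluence together with transitivity makes $\rel;\peq$ transitive and gives ${\peq;\rel}\subseteq{\rel;\peq}$, so ${\brel}\subseteq{\rel;\peq}$. Conversely, ${\rel}\subseteq{\brel}$ by reflexivity of $\peq$.

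\textbf{The irreflexive case.} Your worry here is justified, and it is more than bookkeeping. Irreflexivity of $\rel$ does not make $\brel$ irreflexive. When $\brel$ has loops, the Cantor integral of $\tau_\brel$ is not $e_\brel$, so Proposition~\ref{prop::topologizing-preserves-truth}(2) fails, and item~1 then fails outright. Here is a counterexample:
\begin{itemize}
\item Let $W=\{w\}\cup\{a_n : n\in\mathbb N\}$, with $\peq$ the preorder generated by $w\peq a_0\peq a_1\peq\cdots$.
\item Let $a_m\rel a_n$ for $n<m$, and $a_m\rel w$ for all $m$.
\item Then $\rel$ is irreflexive, transitive, and backward and forward confluent, so this is even an irreflexive $\ikf$-frame.
\item But ${\brel}=W\times W$ (e.g.\ $w\peq a_0\rel w$), so $\tau_\brel$ is indiscrete.
\item For $A=W\setminus\{w\}$, in $\md{(\mt{\Frame})}$ one gets $\inti\intd A=\inti\{w\}=\varnothing$, while $\intb\inti A=\intb(W\setminus\{w\})=\{w\}$.
\end{itemize}

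The paper's proof makes the same tacit identification, so this is a defect of the statement rather than of your method. Your argument is complete for reflexive frames, and for irreflexive frames under the extra assumption that $\brel$ is irreflexive. Items~2 and~4 do not involve $\tau_\brel$ at all. Item~3 also survives without that assumption: a loop $x\brel x$ yields, via ${\brel}\subseteq{\rel;\peq}$, a point $y\neq x$ with $x\brel y\peq x$, and this rules out the discrepancy on $\peq$-upsets.
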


\begin{proof}
    In this proof, we make tacit use of Proposition \ref{prop::topologizing-preserves-truth} and identify operators in the topological setting with the operators obtained from birelational models in Section \ref{sec::birelational-semantics}, \emph{e.g.}, $i_\to(X) = \{w\in W\mid \forall v\in w. w \peq v \text{ implies }v\in X\}$.
    \medskip
    
    \noindent\ref{itClassDBack}. 
    Assume that $\Frame$ is backward confluent and let $X\subseteq W$.
    Let $w\in \inti\intd(X)$ and $u,v\in W$ be such that $w\rel u \peq v$.
    By backward confluence, there is $s$ such that $w\peq s\rel u$.
    From $w\in \inti\intd(X)$ and $w\peq s\rel u$, we get $u\in X$.
    As the argument above work for any $v$ and $u$, $w\in \intb\inti(X)$.
    On the other hand, let $w\in \intb\inti(X)$ and $u,v\in W$ be such $w\peq u \rel v$.
    Then $w\brel v \rel v$, and so $v\in X$.
    We conclude $w\in \inti\intd(X)$.
    \medskip
    
    \noindent\ref{itClassDFor}.
    Assume that $\Frame$ is forward confluent and let $X\subseteq W$.
    It is immediate from the definition of interior operators that $\inti\clod\inti(X) \subseteq \clod\inti(X)$.
    Now, suppose $w\in \clod\inti(X)$, so there is $v$ such that $w\rel v$ and $v\in \inti(X)$.
    If $w\peq s$, then there is $u$ such that $s\rel u$ and $v\peq u$ by forward confluence.
    As $\inti(X)$ is $\tau_\peq$-open, $u \in \inti(X)$.
    Therefore $s\in \clod\inti(X)$.
    We conclude $w\in \inti\clod\inti(X)$.
    \medskip
    
    \noindent\ref{itClassDDwn}.
    Assume that $\Frame$ is downward confluent and let $X\subseteq W$.
    Let $w\in \intd\inti(X)$ and $u,v\in W$ be such that $w\peq u \brel v$.
    By downward confluence, there is $s$ such that $w\rel s\peq u$.
    From $w\in \intd\inti(X)$ and $w\rel s\peq u$, we get $u\in X$.
    As the argument above work for any $v$ and $u$, $w\in \inti\intb(X)$.
    On the other hand, let $w\in \inti\intb(X)$ and $u,v\in W$ be such $w\rel u \peq v$.
    Then $w\brel v \peq v$, and so $v\in X$.
    We conclude $w\in \intb\inti(X)$.
    
    \medskip
    
    \noindent\ref{itLocallyLinear}. By Proposition 3.1 of Bezhanishvili \emph{et al.} \cite{bezhanishvili2015locallylinear}.
\end{proof}

\begin{theorem}
    \label{thm::completeness}
    If $\Lambda\in \{\csf,\isf,\sfi, \gsf,\gsfc\}$,
    then $\Lambda$ is strongly complete for the class of Alexandroff bitopological $\Lambda$-models.
    Furthermore, the finite model property holds for this class of $\Lambda$-models.
\end{theorem}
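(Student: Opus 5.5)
Soundness is already given by Lemma~\ref{lem::soundness}, so only strong completeness and the finite model property are at issue. The plan is to transfer both from the birelational semantics via the Alexandroff topologisation $\Frame\mapsto\mt{\Frame}$. Suppose $\Gamma\nvdash_\Lambda\varphi$. By Theorem~\ref{thm::completeness-all} there is a birelational $\Lambda$-model $\Model$ on a frame $\Frame=\tuple{W,\peq,\rel}$ and a world $w$ with $\Model,w\models\Gamma$ and $\Model,w\not\models\varphi$. Since $\Lambda$ extends $\csf$, we may assume $\rel$ is reflexive. If the canonical model from the appendix does not already have this property, we replace $\rel$ by its reflexive closure. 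The confluence conditions are preserved under this replacement: each condition with $\rel$ taken to be the identity holds trivially using reflexivity of $\peq$. Local linearity does not mention $\rel$ at all.

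Next we form the bitopological space $\tuple{W,\tau_\peq,\tau_\rel}$ and its induced tritopological space. By Lemma~\ref{lemmTrirelToBitop}, the meet $\tau_\peq\cap\tau_\rel$ equals $\tau_\brel$, so this tritopological space is exactly $\mt{\Frame}$. By Proposition~\ref{prop::topologizing-preserves-truth}(1), $\ms{\Frame}=\mc{(\mt{\Frame})}$. Upward closed valuations are precisely $\tau_\peq$-open valuations, so the same valuation makes $\Model$ a bitopological closure model with $\val{\theta}$ unchanged for every formula $\theta$. In particular $w$ satisfies $\Gamma$ and refutes $\varphi$ there. The space is Alexandroff because every topology of the form $\tau_R$ is.

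It remains to check that the space is a tritopological $\Lambda$-space. We match frame conditions with space conditions via Proposition~\ref{prop::classical-diamonds}. Backward confluence gives $\Diamond$-coarseness, forward confluence gives $\Diamond$-regularity, downward confluence gives $\Box$-regularity, and local linearity gives hereditary extremal disconnectedness. We then go through the cases. For $\isf$ the frame is forward and backward confluent. For $\sfi$ it is forward and downward confluent. For $\gsf$ and $\gsfc$ we need the frame to also satisfy forward and backward confluence (resp.\ plus downward). The step I expect to be the main obstacle is exactly here. The frame definition for the Gödel logics lists only local linearity (and downward confluence), whereas a $\gsf$-space requires being an $\isf$-space. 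I would resolve this by using that the completeness proof for $\gsf$ (extending $\isf$) in the appendix yields a canonical frame which is also an $\isf$-frame. Alternatively, if $\gsf$-frames are meant to include the $\isf$ conditions, the case is immediate. A secondary check is that $\Box$ is interpreted with $\brel=(\peq;\rel)^+$ on both sides, which holds by construction.

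For the finite model property, the same transfer works provided the birelational semantics has it. We would obtain finite birelational $\Lambda$-models, for instance by a standard filtration or from the cited results (\cite{balbiani2024variants} for the $\SF$ variants), and topologise them as above. This yields finite, hence Alexandroff, bitopological $\Lambda$-models. If no finite model property is available in the literature for some $\Lambda$, I would carry out a filtration through a finite subformula-closed set, taking transitive closures of the filtered relations. I would then verify that the confluence conditions survive, possibly by taking the least filtration adjusted to restore confluence. This is the step most likely to need real work.
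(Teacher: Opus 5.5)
Your proposal is correct and is essentially the paper's proof. You transfer birelational strong completeness (Theorem~\ref{thm::completeness-all}) to $\mt{\Frame}$ via Propositions~\ref{prop::topologizing-preserves-truth} and~\ref{prop::classical-diamonds}, with Lemma~\ref{lemmTrirelToBitop} giving the bitopological form, and you import the finite model property from the birelational literature. Handling the G\"odel cases through the confluence properties of the canonical model is a useful precision the paper glosses over. The reflexive-closure fallback is unnecessary, because reflexivity of $\rel_{\mathrm c}$ already follows from $\ax{T}_\Box$ and $\ax{T}_\Diamond$ (Lemma~\ref{lem::canonical-lambda-model-is-lambda}). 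You should drop it, since replacing $\rel$ by its reflexive closure does not in general preserve truth of modal formulas.
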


\begin{proof}
    By Propositions~\ref{prop::topologizing-preserves-truth} and \ref{prop::classical-diamonds} along with Theorem~\ref{thm::completeness-all}.
    The finite model property hold for $\Lambda$ as it holds for birelational $\Lambda$-models \cite{balbiani2024variants, girlando}.
\end{proof}

\begin{theorem}
    \label{thm::completeness-k4}
    If $\Lambda\in \{\ckf,\ikf,\kfi, \gkf,\gkfc\}$,
    then $\Lambda$ is strongly complete for the class of Alexandroff tritopological $\Lambda$-models.
\end{theorem}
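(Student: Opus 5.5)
Soundness is given by Lemma~\ref{lem::soundness}, so only strong completeness needs proof. The plan is to take a counter-model from Theorem~\ref{thm::completeness-all}, make its modal relation irreflexive, and then apply Propositions~\ref{prop::topologizing-preserves-truth}(2) and \ref{prop::classical-diamonds}.

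Suppose $\Gamma\nvdash_\Lambda\varphi$. By Theorem~\ref{thm::completeness-all} there is a birelational $\Lambda$-model $\Model=\tuple{W,\peq,\rel,\val\cdot}$ and $w\in W$ with $\Model,w\models\Gamma$ and $\Model,w\not\models\varphi$. In general $\rel$ is neither reflexive nor irreflexive, so Proposition~\ref{prop::topologizing-preserves-truth} cannot be applied directly. We therefore unravel $\rel$ into an irreflexive relation by the standard product construction for $\KF$.
\begin{itemize}
\item Let $W' = W\times\mathbb{Z}$ with the order $(u,m)\peq'(v,n)$ iff $u\peq v$ and $m=n$, so the intuitionistic order acts level-wise.
\item Let $(u,m)\rel'(v,n)$ iff $u\rel v$ and $m<n$. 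This relation is transitive and irreflexive.
\item Let $\val p' = \val p\times\mathbb{Z}$, which is $\peq'$-upward closed.
\end{itemize}

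The projection $\pi\colon W'\to W$ should preserve truth, and we prove this by induction on formulas. The cases for the connectives and for $\to$ are immediate because $\peq'$ is a level-wise copy of $\peq$. For $\Diamond$, if $u\rel v$ then from $(u,m)$ we can reach $(v,m+1)$, and conversely every $\rel'$-step projects to a $\rel$-step. For $\Box$ we need the same for $\brel'=(\peq';\rel')^+$: each path in $\brel'$ projects to a $\brel$-path, and each $\brel$-path lifts by increasing the level at every $\rel$-step. The use of $\mathbb{Z}$ rather than $\mathbb{N}$ ensures that every point has arbitrarily high successor levels, so there is no boundary effect. Hence $\Model',(w,0)$ is again a counter-model.

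It remains to check that $\Model'$ is a $\Lambda$-frame, and this is the main step. Local linearity is preserved because $\peq'$ only compares points on the same level. For forward confluence, suppose $(u,m)\rel'(v,n)$ and $(u,m)\peq'(u',m)$. Original forward confluence gives $v'$ with $v\peq v'$ and $u'\rel v'$, so we take $(v',n)$. Backward and downward confluence transfer the same way, because $\peq'$ keeps the level fixed while $\rel'$ only requires the level to increase, so the witness can be placed on the appropriate level. I expect this verification to be the main obstacle. The care needed is to phrase each confluence condition so that the required level inequality $m<n$ survives.

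Once $\rel'$ is irreflexive, Proposition~\ref{prop::topologizing-preserves-truth}(2) gives $\ms{\Frame'}=\md{(\mt{\Frame'})}$. Proposition~\ref{prop::classical-diamonds} then shows that this Alexandroff tritopological derivative space has the regularity properties required of a $\Lambda$-space. For $\gkf$, which is defined via $\ikf$, we need both the confluences and local linearity, and for $\gkfc$ also downward confluence. Transferring the valuation to the topological model yields an Alexandroff tritopological $\Lambda$-model in which $(w,0)$ satisfies $\Gamma$ but not $\varphi$. This proves strong completeness.
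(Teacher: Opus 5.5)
Your proposal is correct and is essentially the paper's proof. Both take a birelational counter-model from Theorem~\ref{thm::completeness-all}, unravel it over $W\times\mathbb{Z}$ with $\tuple{u,m}\rel'\tuple{v,n}$ iff $u\rel v$ and $m<n$, transfer truth along the projection (the paper calls this the bisimulation $\{\tuple{w,\tuple{w,n}}\mid w\in W,\ n\in\mathbb{Z}\}$), check the confluences, and conclude with Propositions~\ref{prop::topologizing-preserves-truth} and~\ref{prop::classical-diamonds}. The only difference is that you make $\peq'$ level-wise, whereas the paper sets $\tuple{w,n}\peq^{\mathrm{irr}}\tuple{v,m}$ iff $w\peq v$ across all levels; your choice also makes $\brel'$ irreflexive, since every $\brel'$-path strictly raises the level, which is exactly what the identity $\ms{\Frame'}=\md{(\mt{\Frame'})}$ needs on the $\Box$ side, while the paper's version has loops $\tuple{w,n}\peq^{\mathrm{irr}}\tuple{w,n-1}\rel^{\mathrm{irr}}\tuple{w,n}$ whenever $w\rel w$.
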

\begin{proof}
    We show that each birelational $\Lambda$-model is bisimilar to an irreflexive $\Lambda$-model.

    We say $\Model = \tuple{W, \peq, \sqsubset,V}$ and $\Model' = \tuple{W', \peq', \sqsubset',V'}$ are bisimilar if there is a non-empty relation $B \subseteq W\times W'$ which is a bisimulation (as in the single relation setting) with respect to both intuitionistic and modal relations.
    By essentially the same proof as the single relation case, if $wBw'$, then $M,w\models\varphi$ iff $M',w'\models\varphi$, for all formula $\varphi$.

    Let $\Model = \tuple{W, \peq, \sqsubset,V}$ be a $\Lambda$-model.
    We define $\Model^\mathrm{irr} = \tuple{W^\mathrm{irr}, \peq^\mathrm{irr}, \sqsubset^\mathrm{irr},V^\mathrm{irr}}$ as follows:
    \begin{itemize}
        \item $W^\mathrm{irr} = \{\tuple{w,n} \mid w\in W, n\in\mathbb{Z}\}$,
        \item $\tuple{w,n} \peq^\mathrm{irr}\tuple{v,m}$ iff $w\peq v$,
        \item $\tuple{w,n} \rel^\mathrm{irr}\tuple{v,m}$ iff $w\sqsubset v$ and $n<m$, and
        \item $\tuple{w,n} \in V(p)$ iff $w\in V(p)$.
    \end{itemize}

    It is straightforward to check that $\Model^\mathrm{irr}$ is a $\Lambda$-model.
    In particular, the transitiveness of $\peq^\mathrm{irr}$ and $\rel^\mathrm{irr}$ follow from the transitiveness of $\peq$ and $\rel$ respectively, along with the transitiveness of the usual ordering on the integers in the latter.
    The confluence properties of $\Model^\mathrm{irr}$ follow from the confluence properties of $\Model$.
    For example, if $\Model$ is forward confluent, $\tuple{w,m}\peq^\mathrm{irr} \tuple{v,n}$ and $\tuple{w,m} \rel^\mathrm{irr} \tuple{u,k}$, then there are $s$ such that $v\rel s$ and $u\peq s$.
    If $l = \max\{n,k\} + 1$, then $\tuple{v,n}\rel^\mathrm{irr} \tuple{s,l}$ and $\tuple{u,k}\peq^\mathrm{irr} \tuple{s,l}$.
    And so $\Model^\mathrm{irr}$ is forward confluent too.

    At last, we note that $\sqsubset^\mathrm{irr}$ is irreflexive as the ordering $<$ on integers is irreflexive.
    So $\Model^\mathrm{irr}$ is an irreflexive $\Lambda$-model.
    
    It is immediate that if we set $B = \{ \tuple{w,\tuple{v,n}} \mid w = v\}$, then $\Model$ and $\Model^\mathrm{irr}$ are bisimilar \emph{via} $B$.
    The theorem will then follow from Propositions~\ref{prop::topologizing-preserves-truth} and \ref{prop::classical-diamonds} along with Theorem~\ref{thm::completeness-all}.
\end{proof}

\begin{proposition}
    If $\Lambda\in \{\ckf,\ikf,\kfi, \gkf,\gkfc\}$,
    then $\Lambda$ does not have the finite model property with respect to Alexandroff tritopological $\Lambda$-models.
\end{proposition}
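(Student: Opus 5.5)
The plan is to exhibit a single formula that is not a theorem of any of the $\KF$ logics under consideration, yet is valid on every finite Alexandroff tritopological derivative space. Then no $\Lambda$ can have the finite model property for this class. The natural candidate is the Löb formula for $\Box$:
\[
\mathrm{L} := \Box(\Box p\to p)\to\Box p .
\]

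\textbf{Step 1: $\mathrm{L}\notin\Lambda$.} I would use the soundness half of Theorem~\ref{thm::completeness-all} with the one-point birelational frame $W=\{w\}$, where $w\peq w$ and $w\rel w$. This frame is trivially forward, backward and downward confluent, and locally linear, so it is a $\Lambda$-frame for every $\Lambda$ listed (the modal relation is transitive). Take $\val p=\varnothing$. Then $\brel$ is the single loop at $w$, so $w\not\models\Box p$. Hence $w\models\Box p\to p$, because the only $\peq$-successor is $w$ itself and it fails $\Box p$. Therefore $w\models\Box(\Box p\to p)$ but $w\not\models \Box p$, and $\mathrm{L}$ fails in a $\Lambda$-model.

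\textbf{Step 2: finite $T_d$ spaces are conversely well-founded.} Let $\mathcal X$ be a finite tritopological $\Lambda$-space. It is automatically Alexandroff, and, as noted after Definition~\ref{def::bitopological-space}, $\taub$ is $T_d$. Let $U_x$ be the least $\taub$-open set containing $x$. Then the Cantor derivative is $d_\Box A=\{x\mid (U_x\setminus\{x\})\cap A\neq\varnothing\}$, i.e.\ $d_\Box=d_R$ for the relation $x\mathrel R y$ iff $y\in U_x$ and $y\neq x$. I would show that the condition $d_\Box d_\Box\subseteq d_\Box$ forces the specialization preorder to be antisymmetric. Indeed, if $x\neq y$ with $y\in U_x$ and $x\in U_y$, then $x\in d_\Box d_\Box\{x\}$ but $x\notin d_\Box\{x\}$. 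So $R$ is a strict partial order on a finite set, hence conversely well-founded.

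\textbf{Step 3: validity of $\mathrm{L}$.} Let $A=\val p$ and $B=\val{\Box p\to p}=\inti\big((X\setminus e_\Box A)\cup A\big)$. Since $\inti$ is an interior operator, $B\subseteq (X\setminus e_\Box A)\cup A$. Suppose $x\in e_\Box B$ but $x\notin e_\Box A$. Then the set of $R$-successors of $x$ lying outside $A$ is nonempty; choose an $R$-maximal element $y$ of it, using transitivity and converse well-foundedness. By transitivity every $R$-successor $z$ of $y$ is a successor of $x$, and by maximality $z\in A$, so $y\in e_\Box A$. Since $x\mathrel R y$, we also have $y\in B$. Hence $y\in A$, a contradiction. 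So $\val{\mathrm{L}}=X$ in every finite model.

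The main obstacle is Step~2: checking that the $T_d$ requirement really yields a conversely well-founded \emph{strict} order for the $\Box$-derivative. Once that is in place, the classical $\mathsf{GL}$ argument goes through, and the intuitionistic implication only enters via the inclusion $\inti Y\subseteq Y$.
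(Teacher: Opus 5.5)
Your proposal is correct and follows the paper's own argument: the paper's sketch also refutes the finite model property with Löb's axiom $\mathrm{L}$, using that finite $T_d$ spaces are scattered (your Step 2 makes this concrete as converse well-foundedness of the strict relation $R$) and that $\mathrm{L}$ is not derivable in these logics. You simply supply the details the paper leaves out, namely the one-point reflexive countermodel and the $\mathsf{GL}$-style validity argument.
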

\begin{proof}[Proof sketch]
    On one hand, it is straightforward to show that finite $T_d$ (or $T_0$) topological spaces are scattered and thus that they satisfy Löb's axiom $\ax{L} := \Box(\Box p \to p)\to \Box p$. 
    On the other hand, $\ckf$, $\ikf$, $\kfi$, $\gkf$, and $\gkfc$ do not satisfy $\ax{L}$.
\end{proof}

\section{Conclusion}
\label{sec::conclusion}

We have developed a polytopological (and, more generally, polyderivative) framework in which to interpret all prominent `transitive' intuitionistic modal logics, as well as a few that had not been considered previously.
The most salient restriction of our work is that we have assumed all spaces to be $T_d$ (i.e., satisfy the $\KF$ axioms), but it is well known that the logic of Cantor derivative over all topological spaces is $\sf wK4$, which weakens the \ax{4} axiom to $p\wedge \Box p\to\Box\Box p$, and its dual, $\Diamond \Diamond p\to p\vee\Diamond p$.
The reason for working with such a restriction is that these axioms do not seem to yield completeness for the intended semantics.
Determining the logics obtained by dropping the $T_d$ assumption from each of the classes of models we consider immediately leads to several interesting problems we leave open.

We also note that, while our intuitionistic $\SF$ logics are complete for the class of bitopological deriative spaces, completeness over this class does not hold for $\KF$ logics in view of Lemma~\ref{lemmInducedIncomplete}.
Logics of bitopological derivative spaces may be of independent interest and a concrete family of questions is whether adding $\Box q\to p\vee (p\to q)$ to each of the $\KF$ logics we consider yields completeness for its bitopological derivative models.
We also suspect that bitopological versions of $\sf wK4$ may circumvent some of the technical issues with its tritopological variants.

Finally, we leave open the question of establishing completeness for `natural' topological spaces.
Our completeness results rely on birelational completeness, and hence produce Aleksandroff spaces, which usually violate most (if not all) of the separation axioms typically assumed of topological spaces.
One may wonder if completeness still holds for each logic when one or more of the topologies is Hausdorff or even metrizable, in the spirit of the McKinsey--Tarski theorem~\cite{mctarski}.
While any bitopological space induces a $\csf$-space and every $T_d$ bitopological space induces a $\ckf$-space, even the construction of Hausdorff $\isf$ spaces seems to be non-trivial.

\bibliographystyle{eptcs}
\bibliography{biblio}

\Appendix

\section{Strong completeness}
\label{sec::strong-completeness}

In this Appendix, we sketch the proof of strong completeness for $\ckf$, $\ikf$, $\kfi$, $\gkf$, $\gkfc$, $\csf$, $\isf$, $\sfi$, $\gsf$, $\gsfc$ with respect to their birelational semantics.
We do need separate arguments depending if the logic includes the axiom $\ax{DP}$.
For most of these logics, these results are already known:
see \cite{servi1984axiomatizations} and~\cite{Simpson94} for $\ikf$ and $\isf$; see \cite{balbiani2024variants} for the other variants of $\SF$.
As far as we are aware, completeness for $\ckf$, $\kfi$, $\gkf$, and $\gkfc$ have not appeared before in the literature.
Since our proofs are variations of those appearing in said references, we omit technical details which have appeared beforehand.
We particularly follow the presentation of~\cite{balbiani2024variants}.

It will be convenient to observe that the following formulas are contained in $\mathsf{CK4}$, and thus also its extensions.
We leave the proofs to the reader.
\begin{proposition}
    \label{auxiliary}
    The following formulas are provable in ${\mathsf{CS4}}$:
    \begin{multicols}{2}
    \begin{enumerate}[label=(\arabic*)]
    	\item\label{der:ax1} $\Diamond {\left( p \rightarrow q\right)} \rightarrow \left(\nec p \rightarrow \Diamond q\right)$,\footnote{This is the first of Fischer Servi's two \emph{connecting axioms} \cite{servi1984axiomatizations}.}	
    	\item \label{der:ax3} $\Diamond \left( p \wedge q\right) \rightarrow \Diamond p \wedge \Diamond q$,
    	\item\label{der:box_dist_and} \mbox{$\nec p \wedge \nec q  \rightarrow \nec \left(p \wedge q\right)$},
    	\item\label{der:ax2} \mbox{$\nec p \vee \nec q  \rightarrow \nec \left(p \vee q\right)$}.
    \end{enumerate}
    \end{multicols}
\end{proposition}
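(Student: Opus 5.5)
Each of the four formulas is derived syntactically, using only the axioms of $\ckf$ (in particular $\ax K_\Box$, $\ax K_\Diamond$, $\ax{Nec}$, $\ax{MP}$) together with intuitionistic propositional reasoning. None of them uses $\ax{T}_\Box$, $\ax{T}_\Diamond$ or the $\ax 4$ axioms, so they hold already in $\ckf$ and hence in every extension, including $\csf$. Throughout I use the derived monotonicity rule: from $\vdash\varphi\to\psi$ infer $\vdash\Box\varphi\to\Box\psi$ and $\vdash\Diamond\varphi\to\Diamond\psi$. To get it, apply $\ax{Nec}$ to $\varphi\to\psi$ to obtain $\Box(\varphi\to\psi)$, then apply $\ax K_\Box$ (resp.\ $\ax K_\Diamond$) and $\ax{MP}$.

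\textbf{Items \ref{der:ax3}, \ref{der:box_dist_and}, \ref{der:ax2}.} Item \ref{der:ax3} comes from monotonicity of $\Diamond$ applied to $p\wedge q\to p$ and to $p\wedge q\to q$, combined by intuitionistic conjunction introduction. Item \ref{der:ax2} comes from monotonicity of $\Box$ applied to $p\to p\vee q$ and to $q\to p\vee q$, combined by disjunction elimination. For item \ref{der:box_dist_and}:
\begin{itemize}
\item necessitate the tautology $p\to(q\to p\wedge q)$, giving $\Box(p\to(q\to p\wedge q))$;
\item apply $\ax K_\Box$ to get $\Box p\to\Box(q\to p\wedge q)$;
\item apply $\ax K_\Box$ again to get $\Box(q\to p\wedge q)\to(\Box q\to\Box(p\wedge q))$;
\item chain these and uncurry to obtain $\Box p\wedge\Box q\to\Box(p\wedge q)$.
\end{itemize}

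\textbf{Item \ref{der:ax1}.} This is the only step requiring a trick, since $\ax K_\Diamond$ needs a boxed implication, while here the implication sits under $\Diamond$.
\begin{itemize}
\item Necessitate the tautology $p\to((p\to q)\to q)$ and apply $\ax K_\Box$, giving $\Box p\to\Box((p\to q)\to q)$.
\item By $\ax K_\Diamond$, $\Box((p\to q)\to q)\to(\Diamond(p\to q)\to\Diamond q)$.
\item Chaining these gives $\Box p\to(\Diamond(p\to q)\to\Diamond q)$.
\item Permute the antecedents intuitionistically to reach $\Diamond(p\to q)\to(\Box p\to\Diamond q)$.
\end{itemize}
This is the main point to get right, and it goes through directly with $\ax K_\Diamond$ as formulated in the paper.
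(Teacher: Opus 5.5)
Your derivations are correct and are the routine argument the paper intends; the paper gives no proof of its own (it says ``We leave the proofs to the reader''). In particular, your use of necessitation with $\ax{K}_\Box$ and $\ax{K}_\Diamond$, including the instantiation $\varphi := p\to q$, $\psi := q$ of $\ax{K}_\Diamond$ for item (1), uses only the $\ax{K}$ axioms, so the formulas already belong to $\ckf$, in line with the paper's remark that they are contained in $\ckf$ and hence in all its extensions, including $\csf$.
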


\subsection{Prime theories}

We first introduce the notions common to all our completeness proofs. 
Let $\Lambda$ be any of the logics defined in Section \ref{sec::syntax}.
A set $\Phi$ of $\lanfull$-formulas is called a \define{$\Lambda$-theory} if it is closed under syntactic consequence, \emph{i.e.}, $\Phi \vdash_\Lambda \varphi$ implies $\varphi \in  \Phi$.
Furthermore, we say that $\Phi$ is \define{prime} if $\varphi \vee \psi \in  \Phi$ implies that either $\varphi \in  \Phi$ or $ \psi \in  \Gamma$.
When clear from context, we omit the reference to $\Lambda$ and refer to $\Lambda$-theories as theories.

\begin{definition}
    \label{def:deduction} 
	Given sets $\Phi$ and $\Xi$ of formulas, we say that $\Phi$ is \define{$\Xi$-consistent} if 	$\Phi  \not \vdash   \Xi$, and $\Phi$ is \define{maximal $\Xi$-consistent} if moreover $\Phi\subsetneq \Phi'$ implies that $\Phi'$ is not $\Xi$-consistent.
    We say that $\Phi$ is \define{consistent} if it is $\varnothing$-consistent (with the understanding that $\bigvee\varnothing \equiv \bot$).
    If $\Xi$ is a singleton $\{\xi\}$, we write $\xi$-consistent instead of $\{\xi\}$-consistent.
    Maximal consistency and maximal $\xi$-consistency are defined analogously.
\end{definition}

Let $\Phi$ and $\Psi$ be theories.
Note that, if $\Phi$ is $\Xi$-consistent, then necessarily $\bot \not\in\Phi$.
We say that the theory $\Psi$ \define{extends} $\Phi$ if $\Phi \subseteq\Psi$.

The following Lindembaum Lemma will be used below:
\begin{lemma}
    \label{lem:lindenbaum} 
    Any $\Xi$-consistent set $\Phi$ of formulas can be extended to a maximal $\Xi$-consistent prime theory $\Phi_*$.
\end{lemma}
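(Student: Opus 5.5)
The plan is a standard Zorn's lemma argument. Throughout I read $\Phi\vdash\Xi$ as: there are $\xi_0,\dots,\xi_m\in\Xi$ with $\Phi\vdash_\Lambda \xi_0\vee\cdots\vee\xi_m$, where the empty disjunction is $\bot$. By the definition of $\vdash_\Lambda$ this is a finitary notion.

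First I will record two facts about $\vdash_\Lambda$, both of which follow because $\Lambda$ contains all intuitionistic tautologies and is closed under \ax{MP}.
\begin{enumerate}[label=(\roman*)]
\item (Deduction) $\Gamma\cup\{\varphi\}\vdash_\Lambda\chi$ iff $\Gamma\vdash_\Lambda\varphi\to\chi$. This holds because $(\psi_0\wedge\cdots\wedge\psi_n\wedge\varphi)\to\chi$ and $(\psi_0\wedge\cdots\wedge\psi_n)\to(\varphi\to\chi)$ are intuitionistically interderivable.
\item (Monotonicity and cut) If $\Gamma\vdash_\Lambda\varphi$ and $\Gamma\cup\{\varphi\}\vdash_\Lambda\chi$, then $\Gamma\vdash_\Lambda\chi$.
\end{enumerate}
Disjunctions of members of $\Xi$ can always be weakened by adding further disjuncts, so whenever two finite disjunctions witness inconsistency we may merge them into a single one.

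Next, consider the family $\mathcal S$ of all $\Xi$-consistent sets $\Psi\supseteq\Phi$, ordered by inclusion. It is non-empty because it contains $\Phi$. Let $\mathcal C$ be a chain in $\mathcal S$. If $\bigcup\mathcal C\vdash\Xi$, then the witnessing derivation uses only finitely many premises $\psi_0,\dots,\psi_n$, and these all lie in a single member of $\mathcal C$. That member would then prove $\Xi$, which is a contradiction. So $\bigcup\mathcal C\in\mathcal S$, and Zorn's lemma yields a maximal element $\Phi_*$. This $\Phi_*$ is maximal $\Xi$-consistent in the sense of Definition~\ref{def:deduction}.

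Then I check that $\Phi_*$ is a theory. Suppose $\Phi_*\vdash_\Lambda\varphi$. If $\Phi_*\cup\{\varphi\}\vdash\Xi$, then by cut we get $\Phi_*\vdash\Xi$, which is impossible. So $\Phi_*\cup\{\varphi\}$ is $\Xi$-consistent, and maximality gives $\varphi\in\Phi_*$.

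Finally, primeness. Suppose $\varphi\vee\psi\in\Phi_*$ but $\varphi,\psi\notin\Phi_*$. By maximality, $\Phi_*\cup\{\varphi\}\vdash\xi$ and $\Phi_*\cup\{\psi\}\vdash\xi'$ for some finite disjunctions $\xi,\xi'$ of members of $\Xi$. Weakening both to the disjunction $\xi\vee\xi'$ and applying the deduction fact, we get $\Phi_*\vdash\varphi\to\xi\vee\xi'$ and $\Phi_*\vdash\psi\to\xi\vee\xi'$. The tautology $(\varphi\to\chi)\wedge(\psi\to\chi)\to(\varphi\vee\psi\to\chi)$ together with $\varphi\vee\psi\in\Phi_*$ then gives $\Phi_*\vdash\xi\vee\xi'$, contradicting $\Xi$-consistency.

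The only step needing care is this primeness argument: it is where the disjunctive reading of $\Phi\vdash\Xi$ and the deduction fact are really used. Everything else is routine bookkeeping with finitely many premises.
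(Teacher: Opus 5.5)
Your proof is correct, but it takes a different route from the paper's. The paper uses the step-by-step Lindenbaum construction: it enumerates all formulas as $\phi_0,\phi_1,\dots$ and adds $\phi_i$ whenever this preserves $\Xi$-consistency. It also adds a disjunct whenever a disjunction is added, and finally passes to the generated $\Lambda$-theory. You instead apply Zorn's lemma to the $\Xi$-consistent supersets of $\Phi$, and then derive deductive closure and primeness from maximality alone.

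The two arguments share their core ingredients. Your chain argument is the finiteness check the paper needs, but leaves implicit, to see that the limit of its construction is still $\Xi$-consistent. Your primeness argument is exactly what justifies the paper's unproved remark that adding a disjunct ``is always possible''. That argument uses the deduction theorem for the local relation $\vdash_\Lambda$, merging finite disjunctions of members of $\Xi$, and disjunction elimination.

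The enumeration gives an explicit construction without the axiom of choice, at the cost of tacitly assuming that $\mathrm{Prop}$ (and hence the language) is countable, which the paper never states. Your version works for a language of any cardinality. It also shows that the explicit disjunct-adding step is unnecessary, since a maximal $\Xi$-consistent set is automatically a prime theory.
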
	
\begin{proof}
    This is a standard argument: enumerate all formulas $\{\phi_i : i \in\mathbb{N}\}$ in the language and by induction on $i$ add $\phi_i$ to the set if the resulting theory is $\Xi$-consistent. 
    Do so in such a way that whenever a disjunction is added, one of the disjuncts is added too. 
    It is straightforward to check that this is always possible.
    The $\Lambda$-theory generated by $\Phi$ and the formulas thus added is then maximal $\Xi$-consistent.
\end{proof}

\subsection{Completeness of \texorpdfstring{$\ps$-regular}{diamond-regular} logics}\label{secCompGS4}
In this subsection, suppose that $\Lambda$ is $\Diamond$-regular, that is, $\ax{DP}\in \Lambda$.
We will uniformly define the canonical model for any $\ps$-regular logic $\Lambda$.
Furthermore, if $\Phi$ is a $\Lambda$-theory then we define $\bfrm\Phi \coloneqq \{ \varphi \in \lanfull \mid \nec\varphi\in \Phi  \}$ and $\dfrm\Phi \coloneqq \{ \varphi \in \lanfull \mid \ps\varphi\notin \Phi  \}$.

\begin{definition}
    Let $\Lambda$ be a $\ps$-regular logic.
    The canonical model for $\Lambda$ is $\mathcal M_{\mathrm c} ^\Lambda=(W_{\mathrm c} ,{\peq_{\mathrm c} },{\rel_{\mathrm c} }, V_{\mathrm c} )$, where:
    \begin{enumerate}[label=\alph*)]
    	\item $W_{\mathrm c} $ is the set of consistent, prime $\Lambda$-theories,
        \item 	${\peq_{\mathrm c}} \subseteq W_{\mathrm c} \times W_{\mathrm c} $ is defined by $\Phi \peq_{\mathrm c}  \Psi$ if and only if $\Phi \subseteq \Psi $,
        \item ${\rel_{\mathrm c}}  \subseteq W_{\mathrm c} \times W_{\mathrm c} $ is defined by $\Phi \rel_{\mathrm c}  \Psi$ if and only if $  \bfrm\Phi \subseteq  \Psi $ and $  \dfrm\Phi \cap \Psi = \varnothing $, 
        
        \item $V_{\mathrm c} \from \mathbb P \to 2^{W_{\mathrm c}}$ is defined by $V_{\mathrm c}(p):= \{\Phi\in W_{\mathrm c} \mid p\in \Phi\}$.
    \end{enumerate}
\end{definition}

This construction always yields $\ps$-regular models, given the assumption that $\Lambda$ is $\ps$-regular.

\begin{lemma}
    \label{lem::canonical-model-confluences}
    Let $\Lambda$ be any $\ps$-regular logic.
    \begin{enumerate}
        \item If $\ax{FS} \in \Lambda$, then $\mathcal M^\Lambda_{\mathrm c}$ is backward confluent.
        \item If $\ax{RV} \in \Lambda$, then $\mathcal M^\Lambda_{\mathrm c}$ is downward confluent.
        \item If $\ax{GD} \in \Lambda$, then $\mathcal M_{\mathrm c}^{\Lambda}$ is locally linear.
    \end{enumerate}
\end{lemma}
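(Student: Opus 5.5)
Each item is a separate witness construction via the Lindenbaum Lemma (Lemma~\ref{lem:lindenbaum}). In each case we are given theories in a confluence configuration and must build the missing prime theory. Consistency of the seed set is derived from the relevant axiom, combined with primeness of the given theories and with \ax{DP}, which makes $\dfrm\Phi$ closed under finite disjunctions. We also use the derived principles of Proposition~\ref{auxiliary}, notably that $\bfrm\Phi$ is closed under conjunction and that $\Diamond(p\to q)\to(\Box p\to\Diamond q)$ holds.

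\emph{Backward confluence.} Suppose $\Phi\rel_{\mathrm c}\Psi\subseteq\Psi'$. We seek a prime $\Phi'\supseteq\Phi$ with $\bfrm{\Phi'}\subseteq\Psi'$ and $\dfrm{\Phi'}\cap\Psi'=\varnothing$. Put $\Xi:=\{\Box\chi : \chi\notin\Psi'\}$ and $\Theta:=\Phi\cup\{\Diamond\psi:\psi\in\Psi'\}$, and extend $\Theta$ to a maximal $\Xi$-consistent prime theory $\Phi'$.
\begin{itemize}
\item Any such $\Phi'$ works. If $\Box\chi\in\Phi'$ then $\chi\in\Psi'$, since otherwise $\Box\chi\in\Xi$. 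If $\psi\in\Psi'$ then $\Diamond\psi\in\Phi'$.
\item It remains to show $\Theta$ is $\Xi$-consistent. Suppose instead $\Phi\vdash\Diamond\psi\to\Box\chi_1\vee\dots\vee\Box\chi_n$, where we merged finitely many $\psi\in\Psi'$ into one conjunction using \ref{der:ax3}.
\item By Proposition~\ref{auxiliary}\ref{der:ax2}, $\Phi\vdash\Diamond\psi\to\Box\chi$ with $\chi=\bigvee\chi_i\notin\Psi'$, using primeness of $\Psi'$.
\item By \ax{FS}, $\Box(\psi\to\chi)\in\Phi$, hence $\psi\to\chi\in\Psi\subseteq\Psi'$, and so $\chi\in\Psi'$, a contradiction.
\end{itemize}

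\emph{Downward confluence.} Suppose $\Phi\subseteq\Phi_1\rel_{\mathrm c}\Psi$. We seek a prime $\Psi'\subseteq\Psi$ with $\Phi\rel_{\mathrm c}\Psi'$. Apply Lindenbaum to $\bfrm\Phi$, avoiding $\Xi:=\dfrm\Phi\cup(\mathcal L\setminus\Psi)$. Primeness of $\Psi$ and the complement clause give $\Psi'\subseteq\Psi$, since the complement of $\Psi$ is closed under disjunction.
\begin{itemize}
\item For consistency, suppose $\Box\beta\in\Phi$ and $\beta\to\delta\vee\gamma$ is provable, with $\Diamond\delta\notin\Phi$ and $\gamma\notin\Psi$. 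Here we used \ax{DP} to collect the diamond-failures into one $\delta$.
\item Then $\Box(\delta\vee\gamma)\in\Phi$, so \ax{RV} gives $\Box\gamma\vee\Diamond\delta\in\Phi$. By primeness, $\Box\gamma\in\Phi\subseteq\Phi_1$, whence $\gamma\in\Psi$, a contradiction.
\item One also checks that maximality yields $\bfrm\Phi\subseteq\Psi'$ and $\dfrm\Phi\cap\Psi'=\varnothing$.
\end{itemize}

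\emph{Local linearity.} This is the standard argument. If $\Phi\subseteq\Psi_1$ and $\Phi\subseteq\Psi_2$ with neither $\Psi_1\subseteq\Psi_2$ nor $\Psi_2\subseteq\Psi_1$, pick $\alpha\in\Psi_1\setminus\Psi_2$ and $\beta\in\Psi_2\setminus\Psi_1$. Since $\Phi$ is prime, \ax{GD} puts $\alpha\to\beta$ or $\beta\to\alpha$ in $\Phi$. Either case yields a contradiction.

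The main obstacle is the consistency computation in the downward-confluence case. We must make sure \ax{DP} genuinely merges the $\dfrm\Phi$-witnesses and that the complement of $\Psi$ is closed under finite disjunctions. The backward case needs similar care in combining \ref{der:ax2} with primeness of $\Psi'$.
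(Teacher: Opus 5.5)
Your proof is correct. For backward confluence it takes a different route from the paper's, and a more careful one.

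The paper seeds Lindenbaum with $\Upsilon=\Phi\cup\{\Diamond\chi:\chi\in\Psi'\}$. It proves only plain consistency, by currying the finitely many $\Diamond\chi_i$ and applying \ax{FS} repeatedly, and then takes an arbitrary prime extension $\Phi'$. It justifies $(\Phi')^\Box\subseteq\Psi'$ by asserting that $\Box\theta\in\Phi'$ implies $\Box\theta\in\Phi$. Its construction does not guarantee this: an arbitrary prime extension of $\Upsilon$ may contain new formulas $\Box\theta$ with $\theta\notin\Psi'$.

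By extending $\Theta$ while avoiding $\Xi=\{\Box\chi:\chi\notin\Psi'\}$, you build the box clause into the construction itself. The price is that your consistency check must collapse the avoided boxes into a single $\Box\chi$ with $\chi\notin\Psi'$, which is where \ref{der:ax2} and the primeness of $\Psi'$ are needed. Merging the diamonds into one $\Diamond\psi$ via \ref{der:ax3} then lets you apply \ax{FS} once rather than iterating.

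For downward confluence and local linearity the paper only says the arguments are similar, so your explicit versions fill that in, and both are sound. In the downward case you run Lindenbaum from $\bfrm\Phi$ avoiding $\dfrm\Phi\cup(\mathcal L\setminus\Psi)$, using \ax{DP}, \ax{RV} and the primeness of $\Phi$ and $\Psi$. For local linearity you use \ax{GD} together with the primeness of $\Phi$.

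Two bookkeeping remarks on the downward case:
\begin{itemize}
\item $\Psi'\subseteq\Psi$ and $\dfrm\Phi\cap\Psi'=\varnothing$ follow directly from the $\Xi$-consistency of the theory $\Psi'$, not from maximality or from the primeness of $\Psi$. Primeness of $\Psi$ is only needed to merge the $\gamma$'s into one $\gamma\notin\Psi$.
\item When no $\delta$'s occur, handle the case separately. Either conclude $\Box\gamma\in\Phi$ directly from $\ax{K}_\Box$, or set $\delta=\bot$ and use \ax{N} to get $\Diamond\bot\notin\Phi$.
\end{itemize}
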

\begin{proof}
    We consider the first item explicitly; the others are similar.
    Recall that $\ax{FS}$ is $(\Diamond\varphi \to \Box\psi) \to \Box(\varphi\to\psi)$. 
    Suppose that $\Phi \sqsubset_{\mathrm c} \Psi {\peq_{\mathrm c}} \Psi'$. 
    Let $\Upsilon = \Phi \cup \{\Diamond\chi: \chi \in \Psi'\}$. 
    We argue that $\Upsilon$ is consistent, and so can be extended to a theory $\Phi'\in W_c$.
    If so, notice that clearly we have $(\Phi')^\Diamond\cap \Psi' = \varnothing$, and moreover $(\Phi')^\Box \subseteq \Psi'$ follows from the fact that $\Box\theta \in \Phi$ whenever $\Box\theta \in \Phi'$, in which case $\theta \in v \subseteq \Psi'$. Thus, $\Phi \peq_{\mathrm c} \Phi' \sqsubset_{\mathrm c} \Psi'$, as desired.
    
    We check that $\Upsilon$ is consistent. 
    Since every disjunction in $\Upsilon$ belongs to $\Phi$, $\Upsilon$ is prime. 
    Suppose towards a contradiction that $\Upsilon$ is inconsistent. 
    Then, there are finitely many $\chi_1, \hdots, \chi_n \in \Psi'$ such that $\Phi \cup \{\Diamond \chi_1, \hdots, \Diamond \chi_n\} \vdash\varnothing$. 
    Using this and iterative applications of $\ax{FS}$, we obtain:
    \begin{align*}
    \Phi &\vdash \Diamond \chi_1 \to \bigg(\Diamond\chi_2 \to \Big(\Diamond\chi_3 \to \big( \dots (\Diamond \chi_{n-1} \to (\Diamond \chi_n \to \bot)) \dots \big) \Big) \bigg)\\ 
    \Longrightarrow 
    \Phi &\vdash \Diamond \chi_1 \to \bigg(\Diamond\chi_2 \to \Big(\Diamond\chi_3 \to \big( \dots (\Diamond \chi_{n-1} \to (\Diamond \chi_n \to \Box\bot)) \dots \big) \Big) \bigg)\\ 
    \Longrightarrow 
    \Phi &\vdash \Diamond \chi_1 \to \bigg(\Diamond\chi_2 \to \Big(\Diamond\chi_3 \to \big( \dots (\Diamond \chi_{n-1} \to \Box(\chi_n \to \bot)) \dots \big) \Big) \bigg)\\ 
    \Longrightarrow 
    \Phi &\vdash \Diamond \chi_1 \to \bigg(\Diamond\chi_2 \to \Big(\Diamond\chi_3 \to \big( \dots \Box( \chi_{n-1} \to (\chi_n \to \bot)) \dots \big) \Big) \bigg)\\ 
    \Longrightarrow \dots \Longrightarrow
    \Phi &\vdash 
    \Box\bigg[ \chi_1 \to \bigg(\chi_2 \to \Big(\chi_3 \to \big( \dots ( \chi_{n-1} \to (\chi_n \to \bot)) \dots \big) \Big) \bigg)\bigg]. 
    \end{align*}
    Letting $(*)$ be the formula inside the square brackets, we have $\Phi\vdash \Box(*)$ and thus $\Psi \vdash (*)$. 
    As $\Psi \subseteq \Psi'$, we have $\Psi'\vdash (*)$. 
    However, $\Psi'$ contains each $\chi_i$, which contradicts the fact that $\Psi'$ is consistent.
\end{proof}

\begin{lemma}
    \label{lem::canonical-lambda-model-is-lambda}
    If $\Lambda$ is any $\ps$-regular logic, then $\mathcal M^\Lambda_{\mathrm c}$ is a $\Lambda$-model.
\end{lemma}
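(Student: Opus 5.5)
We have to show that $\mathcal M^\Lambda_{\mathrm c}$ is a $\ckf$-model, and that it has the frame conditions required of a $\Lambda$-frame.

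\textbf{Base $\ckf$ structure.} First, $\peq_{\mathrm c}$ is inclusion, hence a preorder. $V_{\mathrm c}$ is monotone, since $p\in\Phi\subseteq\Psi$ gives $p\in\Psi$. For transitivity of $\rel_{\mathrm c}$, suppose $\Phi\rel_{\mathrm c}\Psi\rel_{\mathrm c}\Theta$.
\begin{itemize}
\item If $\Box\varphi\in\Phi$, then $\Box\Box\varphi\in\Phi$ by $\ax 4_\Box$. So $\Box\varphi\in\Psi$, and hence $\varphi\in\Theta$.
\item If $\Diamond\varphi\notin\Phi$, then $\Diamond\Diamond\varphi\notin\Phi$ by $\ax 4_\Diamond$, since $\Phi$ is a theory. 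So $\Diamond\varphi\notin\Psi$, and hence $\varphi\notin\Theta$.
\end{itemize}
This gives $\Phi\rel_{\mathrm c}\Theta$.

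\textbf{The $\csf$ extensions.} Here we check that $\rel_{\mathrm c}$ is reflexive. We have $\bfrm\Phi\subseteq\Phi$ by $\ax T_\Box$. We also have $\dfrm\Phi\cap\Phi=\varnothing$ by $\ax T_\Diamond$: if $\varphi\in\Phi$ then $\Diamond\varphi\in\Phi$.

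\textbf{Confluence conditions.} Backward confluence, downward confluence and local linearity come from Lemma~\ref{lem::canonical-model-confluences}, applied according to whether $\ax{FS}$, $\ax{RV}$ or $\ax{GD}$ belongs to $\Lambda$. What remains is forward confluence, which every $\Diamond$-regular logic except the $\gkf$/$\gsf$ variants needs. (Those variants contain $\ikf$ anyway, and the locally linear frames are also asked to be forward and backward confluent. I treat those clauses uniformly.)

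\textbf{Forward confluence (main obstacle).} Suppose $\Phi\rel_{\mathrm c}\Psi$ and $\Phi\subseteq\Phi'$. We need a prime theory $\Psi'\supseteq\Psi$ with $\Phi'\rel_{\mathrm c}\Psi'$. Take $\Upsilon=\Psi\cup\bfrm{\Phi'}$, and aim to extend it, via Lemma~\ref{lem:lindenbaum}, to a prime theory avoiding $\dfrm{\Phi'}$.

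Since $\dfrm{\Phi'}$ is infinite, this is a $\Xi$-consistency argument with $\Xi=\dfrm{\Phi'}$. Assume for contradiction that $\Upsilon\vdash\delta_1\vee\dots\vee\delta_n$ with each $\Diamond\delta_i\notin\Phi'$. Then there are $\psi\in\Psi$ and $\Box\beta\in\Phi'$ with $\vdash\psi\wedge\beta\to\delta$, where $\delta=\bigvee\delta_i$; here we use Proposition~\ref{auxiliary}\ref{der:box_dist_and} to merge the boxed formulas. This yields $\Box(\psi\to\delta)$ from $\Box\beta$ by $\ax K_\Box$ and necessitation.

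Now:
\begin{itemize}
\item $\Diamond\psi\in\Phi$, since otherwise $\psi\in\dfrm\Phi\cap\Psi$.
\item Hence $\Diamond\psi\in\Phi'$.
\item By $\ax K_\Diamond$, $\Diamond\delta\in\Phi'$.
\item By $\ax{DP}$ and primeness of $\Phi'$, some $\Diamond\delta_i\in\Phi'$.
\end{itemize}
This is a contradiction. Lemma~\ref{lem:lindenbaum} then gives $\Psi'$. It is consistent because $\bot\in\dfrm{\Phi'}$ by $\ax N$. This is exactly where $\Diamond$-regularity is used, and I expect it to be the only delicate point.
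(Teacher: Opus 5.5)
Your proof is correct and follows the same outline as the paper's. Inclusion gives the preorder and the monotone valuation, $\ax{4}_\Box$ and $\ax{4}_\Diamond$ give transitivity of $\rel_{\mathrm c}$, $\ax{T}_\Box$ and $\ax{T}_\Diamond$ give reflexivity, Lemma~\ref{lem::canonical-model-confluences} supplies backward confluence, downward confluence and local linearity, and forward confluence comes from a Lindenbaum argument. The paper only gestures at that last step; your extension of $\Psi\cup(\Phi')^\Box$ avoiding $(\Phi')^\Diamond$, using $\ax{K}_\Diamond$, $\ax{DP}$ and primeness of $\Phi'$ (with $\ax{N}$ covering the empty disjunction), is the standard argument that fills it in.
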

\begin{proof}
    It is immediate from the definitions that $\peq_c$ is a preorder and $V_c(p)$ is closed under $\peq$ for all $p\in\mathrm{Prop}$.
    As $4_\Box$ and $4_\Diamond$ are in $\Lambda$, $\sqsubset_c$ is a transitive relation.
    Furthermore, if $T_\Box$ and $T_\Diamond$ are in $\Lambda$, then it follows that $\sqsubset_c$ is reflexive.
    Forward confluence follows from an argument similar to the one in Lemma \ref{lem::canonical-model-confluences}.
    The other confluences follow from Lemma \ref{lem::canonical-model-confluences}.
\end{proof}

It already follows from the above that, for any of the $\ps$-regular logics $\Lambda$ we consider, $\mathcal M^\Lambda_{\mathrm c}$ is a $\Lambda$-model.
Thus to conclude our completeness proof, it remains to establish the following truth lemma:

\begin{lemma}
    \label{lem::truth-lemma-regular}
    Let $\Lambda$ be any $\ps$-regular logic such that $\Lambda$ contains either $\ax{FS}$ or $\ax{CD}$.
	For any  $\Phi \in W_{\mathrm c} $ and $\varphi \in \lanfull$, 
    \[
        \varphi \in \Phi  \iff  \mathcal M_{\mathrm c} ^{{\Lambda}}, \Phi \models \varphi.
    \]
\end{lemma}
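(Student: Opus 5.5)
We argue by induction on $\varphi$, simultaneously for all $\Phi\in W_{\mathrm c}$. The cases of $\bot$, propositional variables, $\wedge$ and $\vee$ are immediate from consistency, primeness and closure of theories. The case of $\to$ is the standard intuitionistic argument. If $\varphi\to\psi\in\Phi$ and $\Phi\subseteq\Psi\ni\varphi$, then $\psi\in\Psi$ by closure. Conversely, if $\varphi\to\psi\notin\Phi$, then $\Phi\cup\{\varphi\}\not\vdash\psi$ by the deduction theorem. Lemma~\ref{lem:lindenbaum} then extends it to a prime theory $\Psi\supseteq\Phi$ with $\varphi\in\Psi$ and $\psi\notin\Psi$, and the induction hypothesis gives $\Psi\models\varphi$ and $\Psi\not\models\psi$.

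For the modal cases, $\mathcal M^\Lambda_{\mathrm c}$ is a $\Lambda$-model by Lemma~\ref{lem::canonical-lambda-model-is-lambda}. In particular it is forward confluent, so it is $\Diamond$-regular by Proposition~\ref{prop::classical-diamonds}. If $\ax{FS}\in\Lambda$ it is backward confluent, so $\brel=\peq;\rel$ by Lemma~\ref{lemmTC}. If instead the downward-confluence axiom (here $\ax{RV}$) is present, then Lemma~\ref{lemmTC} gives $\brel=\rel;\peq$, and by Lemma~\ref{lemmTReg} and Proposition~\ref{prop::classical-diamonds} we get $\val{\Box\varphi}=\intd\val\varphi$. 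In either case, Lemma~\ref{lemmTReg} reduces $\Diamond$ to its classical clause: $\Phi\models\Diamond\varphi$ iff some $\Psi\ler\Phi$ satisfies $\varphi$.

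For $\Diamond$: if $\Phi\rel_{\mathrm c}\Psi\ni\varphi$, then $\varphi\notin\dfrm\Phi$, so $\Diamond\varphi\in\Phi$. Conversely, suppose $\Diamond\varphi\in\Phi$. Consider $\bfrm\Phi\cup\{\varphi\}$ and try to make it $\dfrm\Phi$-consistent. $\dfrm\Phi$ is closed under finite disjunctions by $\ax{DP}$, so it suffices to show $\bfrm\Phi\cup\{\varphi\}\not\vdash\chi$ for each $\chi$ with $\Diamond\chi\notin\Phi$. If $\vdash\bigwedge\beta_i\wedge\varphi\to\chi$ with $\Box\beta_i\in\Phi$, then necessitation, $\ax K_\Box$ and Proposition~\ref{auxiliary} give $\Box\bigwedge\beta_i\in\Phi$ and $\Box(\varphi\to\chi)\in\Phi$. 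Then $\ax K_\Diamond$ yields $\Diamond\chi\in\Phi$, a contradiction. Lemma~\ref{lem:lindenbaum} now gives a prime $\Psi$ with $\Phi\rel_{\mathrm c}\Psi$ and $\varphi\in\Psi$. Consistency of $\Psi$ uses $\ax N$, i.e.\ $\bot\in\dfrm\Phi$.

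For $\Box$, the left-to-right direction uses $\ax{4}_\Box$ and the monotonicity of $\Box$ along $\peq_{\mathrm c}$: if $\Box\varphi\in\Phi\subseteq\Phi'\rel_{\mathrm c}\Psi$, then $\varphi\in\Psi$, and for the $\rel;\peq$ form the upward closure of theories suffices. For the converse, suppose $\Box\varphi\notin\Phi$; we need a $\brel$-successor omitting $\varphi$, and this is the \textbf{main obstacle}.

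In the $\ax{FS}$ case, we first extend $\Phi$ to a prime $\Phi'\supseteq\Phi$ that is maximal with $\Box\varphi\notin\Phi'$. We then find $\Psi$ with $\bfrm{\Phi'}\subseteq\Psi$, $\dfrm{\Phi'}\cap\Psi=\varnothing$ and $\varphi\notin\Psi$. This amounts to showing that $\bfrm{\Phi'}$ is $(\dfrm{\Phi'}\cup\{\varphi\})$-consistent. If $\bfrm{\Phi'}\vdash\chi\vee\varphi$ with $\Diamond\chi\notin\Phi'$, then $\Box(\chi\vee\varphi)\in\Phi'$. Maximality of $\Phi'$ gives $\Phi'\cup\{\Diamond\chi\}\vdash\Box\varphi$, so $\Diamond\chi\to\Box\varphi\in\Phi'$. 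Applying $\ax{FS}$ to $\neg\chi$-style reasoning, namely $\ax{FS}$ with $\chi$ and $\varphi$, gives $\Box(\chi\to\varphi)\in\Phi'$, and combining this with $\Box(\chi\vee\varphi)$ via $\ax K_\Box$ gives $\Box\varphi\in\Phi'$, a contradiction.

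In the $\ax{RV}$ case, we work directly with $\rel_{\mathrm c}$. If $\bfrm\Phi\vdash\varphi\vee\chi$ with $\Diamond\chi\notin\Phi$, then $\Box(\varphi\vee\chi)\in\Phi$, and $\ax{RV}$ with primeness gives $\Box\varphi\in\Phi$ or $\Diamond\chi\in\Phi$; both are contradictions. So Lemma~\ref{lem:lindenbaum} yields $\Psi$ with $\Phi\rel_{\mathrm c}\Psi$ and $\varphi\notin\Psi$.
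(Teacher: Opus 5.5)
Your proof is correct and follows the paper's route: induction on $\varphi$, with Lindenbaum extensions in the $\to$, $\Diamond$ and $\Box$ cases. It also supplies details the paper's sketch omits, notably the $\Box$ witness, obtained via a maximal $\Box\varphi$-consistent extension together with $\ax{FS}$, or directly via $\ax{RV}$ and primeness. One small remark: the detour through confluence, Lemma~\ref{lemmTC} and Proposition~\ref{prop::classical-diamonds} (the latter stated only for reflexive or irreflexive frames, which the canonical model need not be) is unnecessary, because the $\Diamond$ case works directly from the intuitionistic clause, since $\Diamond\varphi$ persists to every $\peq_{\mathrm c}$-successor, to which your witness construction applies, and the left-to-right $\Box$ case along $(\peq_{\mathrm c};\rel_{\mathrm c})^+$ needs only $\ax{4}_\Box$.
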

\begin{proof}
    The proof is done by structural induction on the formulas.
    Note that we use Lemma \ref{lem:lindenbaum} on the cases for implications, boxes and diamonds.
    For example, if $\Diamond\varphi\in\Phi$, then we can show that $\Phi^\Box \cup \{\varphi\}$ is consistent, and so can be extended to a theory $\Psi$ such that $\Gamma\rel_c\Psi$.
\end{proof}

By Lemma~\ref{lem::truth-lemma-regular} along with the embedding of birelational models into polyderivative models and Lemma~\ref{lem::soundness}:
\begin{theorem}
    \label{thm::completeness-regular}
    If $\Lambda \in \{\ikf,\kfi,\gkf,\gkfc, \isf,\sfi,\gsf,\gsfc\}$,
    then $\Lambda$ is sound and strongly complete with respect to birelational $\Lambda$-models.
\end{theorem}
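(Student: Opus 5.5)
The theorem splits into soundness and strong completeness, and I will treat them separately.

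\emph{Soundness.} A birelational $\Lambda$-model $\Model$ is identified with the polyderivative model over $\ms{\Frame}$. I check that this is a polyderivative $\Lambda$-space in the sense of Definition~\ref{defTopLogic}, and then apply Lemma~\ref{lem::soundness}. The basic facts are routine: $d_\peq$ is a closure operator, $d_\rel$ is a derivative operator because $\rel$ is transitive, and the compatibility identities $\intb\inti=\inti\intb\inti$ and $\intb\inti=\intd\intb\inti$ hold because $\peq$ and $\rel$ are both contained in $\brel$. I then match each confluence property to its regularity condition by repeating the computations of Proposition~\ref{prop::classical-diamonds}. Those computations use only the relational descriptions of the operators, so reflexivity and irreflexivity play no role.
\begin{itemize}
\item Forward confluence gives $\Diamond$-regularity.
\item Backward confluence, through Lemma~\ref{lemmTC}, gives $\Diamond$-coarseness.
\item Downward confluence gives $\Box$-regularity.
\item Local linearity gives hereditary extremal disconnectedness \cite{bezhanishvili2015locallylinear}.
\end{itemize}
For the $\SF$ variants, reflexivity of $\rel$ makes $d_\rel$ a closure operator, and reflexivity of $\brel$ makes $e_\brel$ an interior operator.

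\emph{Completeness, $\Diamond$-regular logics.} Suppose $\Gamma\nvdash_\Lambda\varphi$. By Lemma~\ref{lem:lindenbaum}, extend $\Gamma$ to a maximal $\varphi$-consistent prime theory $\Phi$. By Lemma~\ref{lem::canonical-lambda-model-is-lambda}, $\mathcal M^\Lambda_{\mathrm c}$ is a $\Lambda$-model. By the truth lemma (Lemma~\ref{lem::truth-lemma-regular}), every formula of $\Gamma$ holds at $\Phi$ and $\varphi$ fails there. This is Theorem~\ref{thm::completeness-regular}.

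One subtlety is that the truth lemma for $\Box$ refers to $\brel=(\peq_{\mathrm c};\rel_{\mathrm c})^+$, not to $\rel_{\mathrm c}$. The canonical model is backward or downward confluent, so Lemma~\ref{lemmTC} reduces $\brel$ to $\peq;\rel$ or to $\rel;\peq$. In the second case I need $\Box\varphi\in\Phi$, $\Phi\rel\Psi$ and $\Psi\subseteq\Psi'$ to imply $\varphi\in\Psi'$. This holds because $\Psi'$ extends $\Psi$.

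For $\gkf$ and $\gkfc$ the frame class must also be forward and backward confluent. The canonical model of any logic extending $\ikf$ has these properties, so locally linear models with them suffice.

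\emph{Completeness, $\ckf$ and $\csf$ (no \ax{DP}).} This is where I expect the main obstacle. Without \ax{DP}, $\Diamond$ does not distribute over disjunction, so worlds cannot be single prime theories. I would follow the approach of \cite{balbiani2024variants}: take worlds to be pairs $(\Phi,\Xi)$ with $\Phi$ a prime theory and $\Xi$ a set of formulas. Set $(\Phi,\Xi)\peq(\Psi,\Upsilon)$ iff $\Phi\subseteq\Psi$, and set $(\Phi,\Xi)\rel(\Psi,\Upsilon)$ iff $\bfrm\Phi\subseteq\Psi$ and $\Psi\cap\Xi=\varnothing$, with $\Xi\supseteq\dfrm\Phi$ recording the diamonds refuted at that world.

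The $\Diamond$ case of the truth lemma then needs a witness. Suppose $\Diamond\psi\in\Phi$ and $\Phi\subseteq\Phi'$. I must find a $\rel$-successor of $\Phi'$ containing $\psi$ and avoiding $\dfrm{\Phi'}$. Assume instead that $\bfrm{\Phi'}\cup\{\psi\}\vdash\bigvee\Xi_0$ for a finite $\Xi_0\subseteq\dfrm{\Phi'}$. Then $\ax K_\Diamond$ yields $\Diamond\bigvee\Xi_0\in\Phi'$, and this is contradictory only if $\Xi_0$ can be chosen to be a singleton. That is exactly why a refutation set $\Xi$ is needed instead of $\dfrm\Phi$: the Lindenbaum step must be run relative to $\Xi$ kept closed under disjunction. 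Transitivity of $\rel$ comes from $\ax 4_\Box$ and $\ax 4_\Diamond$, and reflexivity for $\csf$ from $\ax T_\Box$ and $\ax T_\Diamond$. No confluence is required, since the semantics already uses $\brel$.
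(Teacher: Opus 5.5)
Your route is the paper's: soundness by viewing a birelational $\Lambda$-model as a polyderivative model on $\ms{\Frame}$ and invoking Lemma~\ref{lem::soundness}, and completeness via Lindenbaum, the canonical model and the truth lemma (Lemma~\ref{lem::truth-lemma-regular}). Two of your observations are correct and make explicit what the paper leaves implicit:
\begin{itemize}
\item the confluence-to-regularity computations of Proposition~\ref{prop::classical-diamonds} go through on $\ms{\Frame}$ with no (ir)reflexivity assumption;
\item $\gkf$-frames must be read as locally linear $\ikf$-frames, since on the paper's footnote frame $w\rel v\peq u$, which is locally linear but not backward confluent, \ax{FS} fails.
\end{itemize}
The $\ckf$/$\csf$ part is not needed here; it is Theorem~\ref{thm::completeness-irregular}. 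If you pursue it, note that your $\rel$ there is not evidently transitive unless you also require $\Xi\subseteq\Upsilon$. That is the role of the paper's condition $\Phi^\Diamond\subseteq\Psi^\Diamond$.

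Some intermediate claims are false, though each is locally fixable.

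\textbf{The compatibility identities.} $\peq\subseteq\brel$ fails for the $\KF$ logics: if $\rel=\varnothing$ then $\brel=\varnothing$. The identity $\intb\inti=\intd\intb\inti$ is also false. Take $W=\{x,y\}$ with $x\rel y$ and $\peq$ the identity; then $\intb\inti\varnothing=\{y\}$ while $\intd\intb\inti\varnothing=W$. What does hold is $\peq;\brel\subseteq\brel$ and $\rel;\brel\subseteq\brel$. These give $\intb\inti=\inti\intb\inti$, but only the inclusion $\intb\inti\subseteq\intd\intb\inti$, i.e.\ $\intb\inti A$ is $\taud$-open. That inclusion is all the definition of a polyderivative space asks, because the $i_\Diamond$ there is the $\taud$-interior, not $e_\Diamond$.

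\textbf{Lemma~\ref{lemmTC}.} Its second item is only correct in the form $\brel;\peq=\rel;\peq$. The frame $x\rel y\peq z$ is downward confluent, and $x\mathrel{(\rel;\peq)}z$ holds but not $x\brel z$.

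\textbf{The $\Box$ clause.} Your ``subtlety'' treats the easy half, which needs no confluence at all: push $\Box\varphi$ along a $(\peq;\rel)$-chain using monotonicity, $\ax{4}_\Box$ and $\bfrm\Phi\subseteq\Psi$. The half that actually uses \ax{FS} or \ax{RV} is the converse. When $\Box\varphi\notin\Phi$ you need a $\brel$-successor omitting $\varphi$.
\begin{itemize}
\item Under \ax{FS} it is found as $\Phi\subseteq\Phi'\rel\Psi$.
\item Under \ax{RV} it is found directly as $\Phi\rel\Psi$. If $\bfrm\Phi\vdash\varphi\vee\chi$ with $\Diamond\chi\notin\Phi$ (using \ax{DP} to merge finitely many such $\chi$), then \ax{RV} and primeness give $\Box\varphi\in\Phi$, a contradiction.
\end{itemize}
Since you cite the truth lemma for this, your argument matches the paper's level of detail once these points are corrected.
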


\subsection{Completeness of \texorpdfstring{$\ckf$}{CK4} and \texorpdfstring{$\csf$}{CS4}}\label{secCompCS4}

In this subsection we prove that $\ckf$ and $\csf$ are strongly complete for the class of birelational $\ckf$- and $\csf$-frames, respectively.
As far as we are aware, the strong completeness for $\ckf$ does not appear in the literature.

For this subsection, let $\Lambda\in\{\ckf,\csf\}$.

The proof is similar to the completeness for $\Diamond$-regular logics, but we need some modifications to treat the semantics of diamonds.
Here, we work with \define{augmented $\Lambda$-theories}, which are ordered pairs of sets of formulas  $\Phi = (\Phi^+ ; \Phi^{\ps})$ where $\Phi^+$ is a $\Lambda$-theory.
We say that $\Phi$ is \define{prime} if $\Phi^+$ is prime.
Again, we omit the reference to $\Lambda$ when not ambiguous.

The intuition behind augmented theories is the following: formulas in $\Phi^+$ are the ones satisfied by the theory, and the formulas in $\Phi^{\ps}$ are those $\varphi$ such that $\ps\varphi$ is falsified directly by $\rel$ not having any witnesses for $\varphi$---opposed to being falsified via a $ \peq  $-accessible world.

\begin{definition}
    \label{def:deduction2} 
	Given a set of formulas $\Xi$, we say that an augmented theory $\Phi$ is \define{$\Xi$-consistent} if for any finite set $\Delta \subseteq \Phi^{\ps}$,
	\[
		 \Phi^+ \not \vdash_\Lambda   \Xi,  \ps\bigvee \Delta.
	\]
    We adopt the convention that $\ps \bigvee \varnothing \coloneqq  \bot$.
    We say that $\Phi$ is \define{consistent} if it is $\varnothing$-consistent.
\end{definition}

Note that if $\Phi$ is a $\Xi$-consistent prime augmented $\Lambda$-theory, then necessarily $\bot \not\in\Phi^+$.
We say that an augmented $\Lambda$-theory $\Psi$ \define{extends} $\Phi$ if $\Phi^+\subseteq\Psi^+$ and $\Phi^\ps\subseteq\Psi^\ps$.

As with theories, we can also extend augmented theories to prime augmented theories:
\begin{lemma}
    \label{lem:lindenbaumGen} 
    Any $\Xi$-consistent augmented $\Lambda$-theory $(\Phi^+; \Phi^{\ps})$ can be extended to a $\Xi$-consistent prime augmented theory $(\Phi^+_*; \Phi^{\ps})$.
\end{lemma}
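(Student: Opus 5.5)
The plan is a Lindenbaum construction that enlarges only the first component $\Phi^+$ and leaves the second component $\Phi^\ps$ fixed. Fix an enumeration $\{\varphi_i : i\in\mathbb N\}$ of all formulas. Set $\Phi_0 := \Phi^+$. At stage $i$, consider the theory generated by $\Phi_i\cup\{\varphi_i\}$. If the augmented pair formed by this theory and $\Phi^\ps$ is still $\Xi$-consistent, let $\Phi_{i+1}$ be that theory; otherwise let $\Phi_{i+1}:=\Phi_i$. Finally put $\Phi^+_* := \bigcup_i \Phi_i$.

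There are three things to check.

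\emph{$\Phi^+_*$ is a theory.} Every stage is a deductively closed set and the stages form a chain, so the union is closed under $\vdash_\Lambda$, because derivations use only finitely many premises.

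\emph{$(\Phi^+_*;\Phi^\ps)$ is $\Xi$-consistent.} Suppose $\Phi^+_*\vdash_\Lambda \Xi,\ps\bigvee\Delta$ for some finite $\Delta\subseteq\Phi^\ps$. The derivation uses finitely many premises, and these all lie in some $\Phi_i$. This contradicts the invariant, maintained by the construction, that each $(\Phi_i;\Phi^\ps)$ is $\Xi$-consistent. Here I read $\Phi\vdash \Xi,\theta$ as $\Phi\vdash \bigvee\Xi'\vee\theta$ for some finite $\Xi'\subseteq\Xi$, and I allow a finite $\Delta$ because $\ps$ is not assumed to distribute over disjunction.

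\emph{$\Phi^+_*$ is prime.} This is the main obstacle. Suppose $\alpha\vee\beta\in\Phi^+_*$ but $\alpha,\beta\notin\Phi^+_*$. By the construction, at the stages where $\alpha$ and $\beta$ were enumerated, adding each one destroyed consistency. Since the stages only grow, there is a single stage $\Phi_k$ containing $\alpha\vee\beta$ with
\[
\Phi_k,\alpha\vdash \bigvee\Xi_1\vee\ps\bigvee\Delta_1
\quad\text{and}\quad
\Phi_k,\beta\vdash \bigvee\Xi_2\vee\ps\bigvee\Delta_2 .
\]
Put $\Delta:=\Delta_1\cup\Delta_2$. By monotonicity of $\ps$, which follows from $\ax{K}_\Diamond$ together with $\ax{Nec}$ applied to $\bigvee\Delta_j\to\bigvee\Delta$, we get $\ps\bigvee\Delta_j\to\ps\bigvee\Delta$ for $j=1,2$. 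Both right-hand sides therefore weaken to $\bigvee(\Xi_1\cup\Xi_2)\vee\ps\bigvee\Delta$. Intuitionistic disjunction elimination now gives
\[
\Phi_k,\alpha\vee\beta\vdash \bigvee(\Xi_1\cup\Xi_2)\vee\ps\bigvee\Delta ,
\]
and since $\alpha\vee\beta\in\Phi_k$ this contradicts the consistency of $(\Phi_k;\Phi^\ps)$. The key point is that merging the two finite sets $\Delta_1,\Delta_2$ into one $\Delta$ is legitimate precisely because of monotonicity of $\ps$, so no appeal to $\ax{DP}$ is needed. That is what makes the lemma work for $\ckf$ and $\csf$.

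The conclusion is that $(\Phi^+_*;\Phi^\ps)$ extends $(\Phi^+;\Phi^\ps)$, is $\Xi$-consistent, and is prime, as required.
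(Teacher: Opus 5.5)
Your proof is correct. It is essentially the standard Lindenbaum enumeration the paper uses for Lemma~\ref{lem:lindenbaum}; the paper gives no separate proof of this lemma. You obtain primeness from maximality by disjunction elimination rather than by the paper's device of adding a disjunct whenever a disjunction is added, and you correctly make explicit the one step specific to augmented theories: merging $\Delta_1$ and $\Delta_2$ into a single finite $\Delta\subseteq\Phi^{\ps}$ via monotonicity of $\ps$ (from $\ax{K}_\Diamond$ and $\ax{Nec}$), which the paper leaves implicit.
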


If $\Phi$ is an augmented $\Lambda$-theory then we define $\bfrm\Phi \coloneqq \{ \varphi \in \lanfull \mid \nec\varphi\in \Phi^+ \}$.

\begin{definition}
    We define the canonical model for $\Lambda$ as $\mathcal M_{\mathrm c}^\Lambda=(W_{\mathrm c}, {\peq_{\mathrm c} },{\rel_{\mathrm c} }, V_{\mathrm c} )$, where~
    \begin{itemize}
    	\item $W_{\mathrm c} $ is the set of all consistent prime augmented $\Lambda$-theories,
    	\item ${\peq_{\mathrm c} } \subseteq W_{\mathrm c} \times W_{\mathrm c} $ is defined by $\Phi \peq_{\mathrm c}  \Psi$ if and only if $\Phi^+ \subseteq \Psi^+$,
    	\item ${\rel_{\mathrm c} } \subseteq W_{\mathrm c} \times W_{\mathrm c} $ is defined by $\Phi \rel_{\mathrm c}  \Psi$ if and only if $\bfrm\Phi\subseteq \Psi^+$ and $\Phi^\ps \subseteq \Psi^{\ps}$,
    	\item $V_{\mathrm c} $ is defined by $V_{\mathrm c} (p)  = \lbrace \Phi \in W_{\mathrm c} \mid p \in \Phi^+  \rbrace$.
    \end{itemize}
\end{definition}

We can prove the following in Lemma \ref{lem::canonical-lambda-model-is-lambda}:
\begin{lemma}
    \label{lemCS4IsModel}
    If $\Lambda = \{\ckf,\csf\}$, then $\mathcal M_{\mathrm c}^\Lambda$ is a birelational $\Lambda$-model.
\end{lemma}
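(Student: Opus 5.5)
We have to show that $\mathcal M_{\mathrm c}^\Lambda$ is a birelational $\Lambda$-model. By Definition~\ref{def::csf-model}, this means checking four things: $\peq_{\mathrm c}$ is a preorder, $\rel_{\mathrm c}$ is transitive, $\rel_{\mathrm c}$ is reflexive when $\Lambda=\csf$, and $V_{\mathrm c}$ is $\peq_{\mathrm c}$-monotone. No confluence conditions are required, so unlike Lemma~\ref{lem::canonical-lambda-model-is-lambda} we never need to build new theories. Every check reduces to a closure property of the theories $\Phi^+$.

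\emph{Routine parts.} $\peq_{\mathrm c}$ is inclusion of the $\Phi^+$ components, so it is reflexive and transitive. If $p\in\Phi^+\subseteq\Psi^+$ then $p\in\Psi^+$, so $V_{\mathrm c}$ is upward closed.

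\emph{Transitivity of $\rel_{\mathrm c}$.} Suppose $\Phi\rel_{\mathrm c}\Psi\rel_{\mathrm c}\Theta$. The diamond component is immediate: $\Phi^\ps\subseteq\Psi^\ps\subseteq\Theta^\ps$. For the box component, take $\Box\varphi\in\Phi^+$. By $\ax{4}_\Box$ and closure of $\Phi^+$ under $\vdash_\Lambda$, we get $\Box\Box\varphi\in\Phi^+$. Hence $\Box\varphi\in\bfrm\Phi\subseteq\Psi^+$, so $\varphi\in\bfrm\Psi\subseteq\Theta^+$. This gives $\bfrm\Phi\subseteq\Theta^+$.

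\emph{Reflexivity for $\csf$.} The inclusion $\Phi^\ps\subseteq\Phi^\ps$ is trivial. For the box component, if $\Box\varphi\in\Phi^+$ then $\ax{T}_\Box$ and closure give $\varphi\in\Phi^+$, so $\bfrm\Phi\subseteq\Phi^+$.

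\emph{The only delicate point.} Nothing above uses consistency or primeness, so $\rel_{\mathrm c}$ is a relation on $W_{\mathrm c}$ as required. One might worry whether the $\ax{4}_\Diamond$ and $\ax{T}_\Diamond$ axioms impose extra conditions. They do not: the frame definition only asks for transitivity, plus reflexivity in the $\csf$ case. Those axioms are used in the truth lemma, not here. The main thing to double-check is that the $\csf$-frame requirement is indeed reflexivity of $\rel$, which the closure-space setting of Proposition~\ref{prop::topologizing-preserves-truth} suggests. If it were instead only transitivity, the reflexivity step would simply be unnecessary.
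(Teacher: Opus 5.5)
Your proof is correct and follows the paper's own route, which simply reruns the argument of Lemma~\ref{lem::canonical-lambda-model-is-lambda}: the preorder and the monotonicity of $V_{\mathrm c}$ are immediate, $\ax{4}_\Box$ gives transitivity of $\rel_{\mathrm c}$, and $\ax{T}_\Box$ gives its reflexivity. Your remark that $\ax{4}_\Diamond$ and $\ax{T}_\Diamond$ are not needed here is accurate, because in the augmented canonical model the diamond clause of $\rel_{\mathrm c}$ is plain inclusion $\Phi^\ps\subseteq\Psi^\ps$; you are also right that the intended $\csf$-frame condition is reflexivity of $\rel$.
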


The following Truth Lemma is proved as \ref{lem:lindenbaumGen}
\begin{lemma}
    \label{lem::truth-lemma-irregular}
	For any prime augmented theory $\Phi \in W_{\mathrm c} $ and $\varphi \in \lanfull$, 
    \[
        \varphi \in \Phi^+ \iff \mathcal M_{\mathrm c} ^{{\mathsf{CS4}}}, \Phi \models \varphi.
    \]
\end{lemma}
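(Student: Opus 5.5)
We argue by structural induction on $\varphi$, simultaneously for all $\Phi\in W_{\mathrm c}$. The semantics are those of the birelational $\ckf$-model $\mathcal M^\Lambda_{\mathrm c}$, read through the preceding lemma: $\to$ quantifies over $\peq_{\mathrm c}$, $\Box$ over $\brel_{\mathrm c}=(\peq_{\mathrm c};\rel_{\mathrm c})^+$, and $\Diamond$ means that every $\peq_{\mathrm c}$-successor has a $\rel_{\mathrm c}$-successor satisfying $\varphi$.

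\textbf{Propositional cases.} Atoms hold by the definition of $V_{\mathrm c}$. For $\bot$, consistency gives $\bot\notin\Phi^+$. The case of $\wedge$ uses closure of theories, and $\vee$ uses primeness of $\Phi^+$. For $\varphi\to\psi$, the forward direction follows from monotonicity of $\Phi^+$ along $\peq_{\mathrm c}$ and modus ponens. For the converse, suppose $\varphi\to\psi\notin\Phi^+$. Then $(\Phi^+\cup\{\varphi\};\Phi^\ps)$ is $\psi$-consistent: otherwise $\Phi^+\vdash\varphi\to\psi\vee\Diamond\bigvee\Delta$, and we would need this to contradict the assumption. To avoid that issue we instead extend to $(\Psi^+;\varnothing)$. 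Since $\Diamond\bigvee\varnothing=\bot$, consistency then just means $\Phi^+\cup\{\varphi\}\not\vdash\psi$. Lemma~\ref{lem:lindenbaumGen} yields a prime $\Psi\succcurlyeq_{\mathrm c}\Phi$ with $\varphi\in\Psi^+$ and $\psi\notin\Psi^+$, and the induction hypothesis closes the case.

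\textbf{Box.} If $\Box\varphi\in\Phi^+$ and $\Phi\brel_{\mathrm c}\Psi$, we induct on the length of the chain. Along $\peq_{\mathrm c}$ boxed formulas persist. Along $\rel_{\mathrm c}$, axiom $\ax{4}_\Box$ gives $\Box\Box\varphi$ and hence $\Box\varphi$ in the successor, and finally $\varphi\in\Psi^+$. Conversely, suppose $\Box\varphi\notin\Phi^+$. Consider the augmented theory $(\bfrm\Phi;\Phi^\ps)$ made into a theory and test it for $\varphi$-consistency. If $\bfrm\Phi\vdash\varphi\vee\Diamond\bigvee\Delta$ with $\Delta\subseteq\Phi^\ps$, then by $\ax{Nec}$ and $\ax K_\Box$ we get $\Box(\varphi\vee\Diamond\bigvee\Delta)\in\Phi^+$.

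\textbf{Main obstacle.} This last step, and its $\Diamond$ analogue, is where the argument can fail. Without $\ax{RV}$ we cannot split $\Box(\varphi\vee\Diamond\bigvee\Delta)$. The fix I would try is to choose the $\Diamond$-component of the new world to be empty rather than $\Phi^\ps$. That choice is unavailable, however, because $\rel_{\mathrm c}$ demands $\Phi^\ps\subseteq\Psi^\ps$. So I would first pass to a $\peq_{\mathrm c}$-successor: take $\Phi'=(\Phi^+;\varnothing)$. This is consistent and satisfies $\Phi\peq_{\mathrm c}\Phi'$, since $\peq_{\mathrm c}$ only compares $^+$-parts. Then use $\Phi'\rel_{\mathrm c}\Psi$ with $\Psi=(\Psi^+;\varnothing)$ extending $\bfrm\Phi$ and $\varphi$-consistent. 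Such a $\Psi$ exists because $\bfrm\Phi\vdash\varphi$ would give $\Box\varphi\in\Phi^+$. Then $\Phi\brel_{\mathrm c}\Psi$ and $\Psi\not\models\varphi$, using the induction hypothesis.

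\textbf{Diamond.} If $\Diamond\varphi\in\Phi^+$ and $\Phi\peq_{\mathrm c}\Phi'$, then $\Diamond\varphi\in\Phi'^+$. We build a $\rel_{\mathrm c}$-successor of $\Phi'$ from $(\bfrm{\Phi'}\cup\{\varphi\};\Phi'^\ps)$. Its consistency follows because a derivation $\bfrm{\Phi'},\varphi\vdash\Diamond\bigvee\Delta$ yields, via $\ax K_\Diamond$ and $\ax 4_\Diamond$, that $\Phi'^+\vdash\Diamond\bigvee\Delta$. This contradicts the consistency of $\Phi'$, with $\ax N$ handling $\Delta=\varnothing$. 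Conversely, suppose $\Diamond\varphi\notin\Phi^+$. Take $\Phi'=(\Phi^+;\Phi^\ps\cup\{\varphi\})$, which is consistent by $\ax{DP}$-free reasoning: a disjunction inside a single $\Diamond$ suffices, which is exactly why augmented theories carry $\Diamond\bigvee\Delta$. Every $\rel_{\mathrm c}$-successor $\Psi$ of $\Phi'$ has $\varphi\in\Psi^\ps$. Consistency of $\Psi$ together with $\ax T$-free reasoning should give $\varphi\notin\Psi^+$; this needs the lemma $\varphi\in\Psi^+\Rightarrow\Psi^+\vdash\Diamond\varphi$ only via its own successors. I expect this to need care, and I would instead verify directly that $\varphi\in\Psi^\ps$ forces $\varphi\notin\Psi^+$ when $\Psi$ arises as a witness.
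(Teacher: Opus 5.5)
The propositional cases, both directions for $\nec$, and the forward direction for $\ps$ (via $\ax{K}_\ps$, $\ax{4}_\ps$ and $\ax{N}$) are fine. In the $\nec$ case, the detour through $(\Phi^+;\varnothing)$ is legitimate because $\peq_{\mathrm c}$ ignores the $\ps$-component and $\nec$ is read along $(\peq_{\mathrm c};\rel_{\mathrm c})^+$. The paper itself gives no argument for this lemma beyond deferring to the literature.

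The gap is in the converse $\ps$ case, which fails as written at two points. First, $(\Phi^+;\Phi^\ps\cup\{\varphi\})$ need not be consistent. Consistency requires $\Phi^+\nvdash\ps(\bigvee\Delta_0\vee\varphi)$ for every finite $\Delta_0\subseteq\Phi^\ps$. Without $\ax{DP}$ this does not follow from $\ps\varphi\notin\Phi^+$ together with the consistency of $\Phi$: a single $\ps$ over a disjunction is exactly what breaks it.

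For instance, take the $\csf$-model on $\{a,a_1,a_2\}$ with $a\peq a_1$ and $a\peq a_2$, with $\rel$ the reflexive closure of $\{(a,a_1)\}$, and with $q$ true only at $a_1$ and $p$ true only at $a_2$. Then $a$ satisfies $\ps(q\vee p)$ but neither $\ps q$ nor $\ps p$. So $(\mathrm{Th}(a);\{q\})\in W_{\mathrm c}$ and $\ps p\notin\mathrm{Th}(a)$, yet $(\mathrm{Th}(a);\{q,p\})$ is inconsistent. The repair is the trick you already used for $\nec$: take $\Phi'=(\Phi^+;\{\varphi\})$. It is consistent because $\Phi^+\nvdash\bot$ and $\Phi^+\nvdash\ps\varphi$, and $\Phi\peq_{\mathrm c}\Phi'$.

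Second, you leave open why every $\rel_{\mathrm c}$-successor $\Psi$ of $\Phi'$ has $\varphi\notin\Psi^+$, and the route you propose cannot work. The $\ps$-clause demands that \emph{no} $\rel_{\mathrm c}$-successor of $\Phi'$ satisfies $\varphi$, so checking only the successors you construct as witnesses proves nothing. Nor can $\ax{T}$-free reasoning deliver it. In $\ckf$, let $b$ be a world with no $\rel$-successors at which $p$ holds; then $(\mathrm{Th}(b);\{p\})$ is consistent with $p\in\Psi^+\cap\Psi^\ps$.

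However, the lemma concerns $\mathcal M^{\csf}_{\mathrm c}$, so $\ax{T}_\ps$ is available and settles the step at once. If $\varphi\in\Psi^+$, then $\ps\varphi\in\Psi^+$, contradicting the consistency of $\Psi$ with $\Delta=\{\varphi\}\subseteq\Psi^\ps$. Hence, by the induction hypothesis, no $\rel_{\mathrm c}$-successor of $\Phi'$ satisfies $\varphi$, and $\Phi\not\models\ps\varphi$.
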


By Lemma~\ref{lem::truth-lemma-irregular} along with the embedding of birelational models into polyderivative models and Lemma~\ref{lem::soundness}, we have:
\begin{theorem}
    \label{thm::completeness-irregular}
    If $\Lambda \in \{\ckf,\csf\}$,
    then $\Lambda$ is sound and strongly complete with respect to birelational $\Lambda$-models.
\end{theorem}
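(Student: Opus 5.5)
Soundness is already given by Lemma~\ref{lem::soundness} together with the fact that every birelational $\Lambda$-model $\Model$ induces the polyderivative space $\ms{\Model}$. Here we also need that this space satisfies the regularity conditions matching the confluence properties. For the strong completeness half, I would argue by contraposition. Suppose $\Gamma\not\vdash_\Lambda\varphi$. Consider the augmented theory $(\Gamma_\Lambda;\varnothing)$, where $\Gamma_\Lambda$ is the $\Lambda$-theory generated by $\Gamma$. It is $\varphi$-consistent because $\Diamond\bigvee\varnothing\equiv\bot$, and $\Gamma_\Lambda\vdash\varphi\vee\bot$ would give $\Gamma\vdash\varphi$. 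By Lemma~\ref{lem:lindenbaumGen} we extend it to a $\varphi$-consistent prime augmented theory $\Phi=(\Phi^+;\varnothing)$. Then $\Phi$ is consistent, so $\Phi\in W_{\mathrm c}$, and $\varphi\notin\Phi^+\supseteq\Gamma$.

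Next I invoke Lemma~\ref{lemCS4IsModel} to know that $\mathcal M^\Lambda_{\mathrm c}$ is a birelational $\Lambda$-model. For $\ckf$ this needs only that $\peq_{\mathrm c}$ is a preorder, that $\rel_{\mathrm c}$ is transitive, and that valuations are upward closed. Transitivity of $\rel_{\mathrm c}$ comes from $\ax{4}_\Box$, giving $\bfrm\Phi\subseteq\bfrm{(\bfrm\Phi)}$-style inclusions, together with transitivity of $\subseteq$ on the $\Diamond$-components. For $\csf$ we additionally need reflexivity of $\rel_{\mathrm c}$. We get $\bfrm\Phi\subseteq\Phi^+$ by $\ax{T}_\Box$, and $\Phi^\Diamond\subseteq\Phi^\Diamond$ holds trivially.

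Finally, the truth lemma (Lemma~\ref{lem::truth-lemma-irregular}) gives $\mathcal M^\Lambda_{\mathrm c},\Phi\models\psi$ for all $\psi\in\Gamma$ and $\mathcal M^\Lambda_{\mathrm c},\Phi\not\models\varphi$. Hence $\Gamma\not\models\varphi$ over birelational $\Lambda$-models. By the identification of $\Model$ with $\ms{\Model}$, the same countermodel also works in polyderivative semantics.

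The main obstacle is the truth lemma's $\Diamond$ case, which I am taking as given. If I had to verify it, the delicate direction is showing that $\Diamond\psi\notin\Phi^+$ yields a $\peq_{\mathrm c}$-successor with no $\rel_{\mathrm c}$-witness. I would try the extension $(\Phi^+;\Phi^\Diamond\cup\{\psi\})$: its consistency follows from $\ax{DP}$-free reasoning using only primality and $\ax{K}_\Diamond$. Every $\rel_{\mathrm c}$-successor then inherits $\psi$ in its $\Diamond$-component, which forbids $\psi$ via $\ax{N}$ and $\ax{K}_\Diamond$.
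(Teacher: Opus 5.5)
\textbf{Verdict: same route as the paper, with a genuine gap in the $\Diamond$ case of the truth lemma.} The gap can be repaired for $\csf$, but for $\ckf$ it cannot be repaired at all.

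Your route is the paper's own: Lindenbaum-extend $(\Gamma_\Lambda;\varnothing)$, apply Lemma~\ref{lemCS4IsModel} and the truth lemma (Lemma~\ref{lem::truth-lemma-irregular}), and get soundness through $\ms{\Model}$. The bookkeeping you add is correct: the $\varphi$-consistency of $(\Gamma_\Lambda;\varnothing)$, transitivity of $\rel_{\mathrm c}$ via $\ax{4}_\Box$, and reflexivity via $\ax{T}_\Box$.

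\textbf{Two errors in your $\Diamond$ sketch.}
\begin{itemize}
\item First, $(\Phi^+;\Phi^\Diamond\cup\{\psi\})$ need not be consistent. You would need $\Phi^+\nvdash\Diamond(\bigvee\Delta_0\vee\psi)$ for every finite $\Delta_0\subseteq\Phi^\Diamond$. Getting this from $\Phi^+\nvdash\Diamond\bigvee\Delta_0$ and $\Phi^+\nvdash\Diamond\psi$ is exactly $\ax{DP}$ plus primality; $\ax{K}_\Diamond$ only gives the converse, monotone direction. Without $\ax{DP}$, $\Phi^+$ may contain $\Diamond(\chi\vee\psi)$ with $\chi\in\Phi^\Diamond$ while $\Diamond\psi\notin\Phi^+$. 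Since $\peq_{\mathrm c}$ ignores $\Diamond$-components, the right witness is $(\Phi^+;\{\psi\})$, which is consistent precisely because $\Diamond\psi\notin\Phi^+$.
\item Second, for an $\rel_{\mathrm c}$-successor $\Psi$ with $\psi\in\Psi^\Diamond$, consistency gives only $\Diamond\psi\notin\Psi^+$. Excluding $\psi\in\Psi^+$ requires $\psi\to\Diamond\psi$, i.e.\ $\ax{T}_\Diamond$. $\ax{N}$ and $\ax{K}_\Diamond$ cannot replace it: an irreflexive point satisfies $p\wedge\neg\Diamond p$.
\end{itemize}
With these two fixes, your argument is a correct proof for $\csf$.

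\textbf{Why $\ckf$ cannot be rescued.} The gap cannot be closed by this or any other route, because completeness for $\ckf$ as axiomatized fails.

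The formula $\Diamond(p\vee\Diamond p)\to\Diamond p$ is valid on every birelational $\ckf$-model. Suppose every $w'\seq w$ has some $u$ with $w'\rel u$ and $u\models p\vee\Diamond p$. Either $u\models p$, or, using $u\peq u$, there is $z$ with $u\rel z$ and $z\models p$, and then $w'\rel z$ by transitivity. More generally, $\inti\clod(A\cup\inti\clod A)\subseteq\inti\clod A$ holds in every polyderivative space, since $\clod\inti\clod A\subseteq\clod\clod A\subseteq\clod A$.

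Yet the formula is not a $\ckf$-theorem. Take the four-element Boolean algebra with atoms $a,b$. Set $\Box 1=1$ and $\Box x=0$ for $x\neq 1$. Set $\Diamond 0=\Diamond b=0$, $\Diamond a=b$, $\Diamond 1=1$. Then:
\begin{itemize}
\item $\Box$ satisfies $\ax{K}_\Box$, $\ax{4}_\Box$ and necessitation;
\item $\ax{K}_\Diamond$ reduces to monotonicity of $\Diamond$;
\item $\ax{4}_\Diamond$ and $\ax{N}$ hold.
\end{itemize}
So every $\ckf$-theorem evaluates to $1$. But the valuation $p\mapsto a$ sends the formula to $\Diamond 1\to b=b$.

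Hence the $\ckf$ half of the statement is false. Note that the paper states its truth lemma only for $\mathcal M_{\mathrm c}^{\csf}$. Invoking it for $\ckf$, as both you and the paper's one-line proof do, is exactly where the argument breaks. In $\csf$ the formula is derivable from $\ax{T}_\Diamond$, $\ax{K}_\Diamond$ and $\ax{4}_\Diamond$, which is consistent with this diagnosis.
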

\end{document}